\documentclass[12pt]{amsart}
\usepackage{amsfonts,amssymb,amscd,amsmath,enumerate,verbatim}
\usepackage[latin1]{inputenc}
\usepackage{amscd}
\usepackage{latexsym}
\usepackage{mathptmx}
\usepackage{multicol}
\input xy
\xyoption{all}

%
%
%
\def\NZQ{\mathbb}               

\def\ZZ{{\NZQ Z}}
\def\RR{{\NZQ R}}

%
%
\def\frk{\mathfrak}               

\def\Phi{{\frk N}}
%
%
\def\ab{{\bold a}}

\def\eb{{\bold e}}
\def\tb{{\bold t}}
\def\xb{{\bold x}}


%
\def\opn#1#2{\def#1{\operatorname{#2}}} 
%
\opn\chara{char} 
\opn\length{\ell} 
\opn\pd{pd} 
\opn\rk{rk}
\opn\projdim{proj\,dim} 
\opn\injdim{inj\,dim} 
\opn\rank{rank}
\opn\depth{depth} 
\opn\grade{grade} 
\opn\height{height}
\opn\embdim{emb\,dim} 
\opn\codim{codim}

\opn\Tr{Tr} 
\opn\bigrank{big\,rank}
\opn\superheight{superheight}
\opn\lcm{lcm}
\opn\trdeg{tr\,deg}
\opn\reg{reg} 
\opn\lreg{lreg} 
\opn\ini{in} 
\opn\lpd{lpd}
\opn\size{size}
\opn\mult{mult}
\opn\dist{dist}
\opn\cone{cone}
\opn\lex{lex}
\opn\rev{rev}
\opn\codeg{codeg}
%
\opn\div{div} \opn\Div{Div} \opn\cl{cl} \opn\Cl{Cl}
%
%
\opn\Spec{Spec} \opn\Supp{Supp} \opn\supp{supp} \opn\Sing{Sing}
\opn\Ass{Ass} \opn\Min{Min}
%
%
\opn\Ann{Ann} \opn\Rad{Rad} \opn\Soc{Soc}
%
%
\opn\Syz{Syz} \opn\Im{Im} \opn\Ker{Ker} \opn\Coker{Coker}
\opn\Am{Am} \opn\Hom{Hom} \opn\Tor{Tor} \opn\Ext{Ext}
\opn\End{End} \opn\Aut{Aut} \opn\id{id} \opn\ini{in}

\opn\nat{nat}
\opn\pff{pf}
\opn\Pf{Pf} \opn\GL{GL} \opn\SL{SL} \opn\mod{mod} \opn\ord{ord}
\opn\Gin{Gin}
\opn\Hilb{Hilb}\opn\adeg{adeg}\opn\std{std}\opn\ip{infpt}
\opn\Pol{Pol}
\opn\sat{sat}
\opn\Var{Var}
\opn\Gen{Gen}

%
%
\opn\aff{aff} \opn\con{conv} \opn\relint{relint} \opn\st{st}
\opn\lk{lk} \opn\cn{cn} \opn\core{core} \opn\vol{vol}
\opn\link{link} \opn\star{star}
\opn\gr{gr}


\def\Hc{{\mathcal H}}

\def\Pc{{\mathcal P}}
\def\Qc{{\mathcal Q}}

%
%

\def\pot#1#2{#1[\kern-0.28ex[#2]\kern-0.28ex]}

%
%
\opn\dirlim{\underrightarrow{\lim}}
\opn\inivlim{\underleftarrow{\lim}}
%
%
%

\let\iso=\cong

%
%
\let\to=\rightarrow

\def\Implies{\ifmmode\Longrightarrow \else
        \unskip${}\Longrightarrow{}$\ignorespaces\fi}
\def\implies{\ifmmode\Rightarrow \else
        \unskip${}\Rightarrow{}$\ignorespaces\fi}
\def\iff{\ifmmode\Longleftrightarrow \else
        \unskip${}\Longleftrightarrow{}$\ignorespaces\fi}

\let\:=\colon
\newtheorem{Theorem}{Theorem}[section]
\newtheorem{Lemma}[Theorem]{Lemma}
\newtheorem{Corollary}[Theorem]{Corollary}

\newtheorem{Example}[Theorem]{Example}

\newtheorem{Conjecture}[Theorem]{Conjecture}

%
%
\let\epsilon\varepsilon
\let\phi=\varphi
\let\kappa=\varkappa
%
%
\textwidth=15cm \textheight=22cm \topmargin=0.5cm
\oddsidemargin=0.5cm \evensidemargin=0.5cm \pagestyle{plain}
%
%
\def\qed{\ifhmode\textqed\fi
      \ifmmode\ifinner\quad\qedsymbol\else\dispqed\fi\fi}
\def\textqed{\unskip\nobreak\penalty50
       \hskip2em\hbox{}\nobreak\hfil\qedsymbol
       \parfillskip=0pt \finalhyphendemerits=0}
\def\dispqed{\rlap{\qquad\qedsymbol}}

%
\opn\dis{dis}
\opn\height{height}
\opn\dist{dist}
\def\pnt{{\raise0.5mm\hbox{\large\bf.}}}

\opn\Lex{Lex}

%


%
%
%
\begin{document}
\title{Edge rings with $3$-linear resolutions}
\author{Takayuki Hibi, Kazunori Matsuda and Akiyoshi Tsuchiya}
\address{Takayuki Hibi,
Department of Pure and Applied Mathematics,
Graduate School of Information Science and Technology,
Osaka University, Suita, Osaka 565-0871, Japan}
\email{hibi@math.sci.osaka-u.ac.jp}
\address{Kazunori Matsuda,
Kitami Institute of Technology,
Kitami, Hokkaido 090-8507, Japan}
\email{kaz-matsuda@mail.kitami-it.ac.jp}
\address{Akiyoshi Tsuchiya,
Department of Pure and Applied Mathematics,
Graduate School of Information Science and Technology,
Osaka University, Suita, Osaka 565-0871, Japan}
\email{a-tsuchiya@ist.osaka-u.ac.jp}
\thanks{The authors are partially supported by JSPS KAKENHI 26220701, 17K14165 and 16J01549.}
\subjclass[2010]{05E40, 13H10, 52B20}
\keywords{finite graph, edge ring, linear resolution, $\delta$-polynomial}
\begin{abstract}
It is shown that the edge ring of a finite connected simple graph with 
a $3$-linear resolution is a hypersurface.
\end{abstract}
\maketitle
\section*{Introduction}
The edge ring and the edge polytope of a finite connected simple graph together with its toric ideal has been studied by many articles.  Their foundation was established in \cite{OH, 2linear}.  In \cite[Theorem 4.6]{2linear} it is shown that the edge ring $K[G]$, where $K$ is a field, of a finite connected simple graph $G$ on $[N] = \{ 1, \ldots, N \}$ has a $2$-linear resolution if and only if $K[G]$ is isomorphic to the polynomial ring in $N - \delta$ variables over the Segre product $K[x_1, x_2] \sharp K[y_1, \ldots, y_\delta]$ of two polynomial rings $K[x_1, x_2]$ and $K[y_1, \ldots, y_\delta]$, where $\delta$ is the normalized volume (\cite[p.~36]{Stu}) of the edge polytope $\Pc_G$ of $G$.  The purpose of the present paper is to study the question when $K[G]$ has a $3$-linear resolution.   

\begin{Theorem}
\label{thm:lin}
Let $G$ be a finite connected simple graph and $K[G]$ its edge ring.  If $K[G]$ has a $3$-linear resolution, then $K[G]$ is a hypersurface. 
\end{Theorem}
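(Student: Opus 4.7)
My approach is to extract strong numerical constraints from the $3$-linear hypothesis and match them against the combinatorics of $G$.

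First, I set $n = |E(G)|$, $d = \dim K[G]$, and $c = n-d$. The $3$-linear resolution of $K[G]$ over $S = K[t_e : e \in E(G)]$ has the form $0 \to F_p \to \cdots \to F_1 \to S \to K[G] \to 0$ with $F_i = S(-i-2)^{\beta_i}$ for $i \geq 1$. Equating the two expressions
\[
\frac{h(t)}{(1-t)^d} = \frac{1 - \beta_1 t^3 + \beta_2 t^4 - \cdots}{(1-t)^n}
\]
for the Hilbert series forces $p = c$, so $\mathrm{pd}_S K[G] = \codim K[G]$ and $K[G]$ is Cohen--Macaulay. Comparing coefficients of $t^1$, $t^2$, $t^3$ in the identity $h(t)(1-t)^c = 1 - \beta_1 t^3 + \beta_2 t^4 - \cdots$ then yields
\[
h_1 = c, \qquad h_2 = \binom{c+1}{2}, \qquad \mu(I_G) = \beta_1 = c^2 + \binom{c}{3}.
\]
In particular the normalized volume of $\PP_G$ equals $\binom{c+2}{2}$, and the $\delta$-polynomial of the edge polytope is $1 + ct + \binom{c+1}{2}\,t^2$.

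Next I translate the absence of quadric generators in $I_G$ into combinatorial terms. Any $4$-cycle with vertices $i,j,k,\ell$ of $G$ produces a primitive quadric $t_{ij}t_{k\ell} - t_{jk}t_{i\ell} \in I_G$; hence $G$ must be $C_4$-free. Every primitive cubic binomial of $I_G$ then arises from a primitive even closed walk of length $6$, and under the $C_4$-free hypothesis such walks fall into only two types: the $6$-cycles of $G$, or ``bowties,'' i.e.\ pairs of triangles sharing exactly one vertex. I then bound or enumerate these configurations in terms of the triangle count, hexagon count, and their incidences in $G$.

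The final step is to match the combinatorial count of primitive cubics with the rigid algebraic value $c^2 + \binom{c}{3}$ and to verify the Hilbert function identity
\[
H_{K[G]}(3) = \binom{n+2}{3} - \mu(I_G) = \binom{d+2}{3} + c\binom{d+1}{2} + \binom{c+1}{2}\,d.
\]
The main obstacle is to show that for $c \geq 2$ no $C_4$-free graph can realize the prescribed $\delta$-polynomial: any configuration containing several independent bowties or hexagons must either create a $4$-cycle (violating $C_4$-freeness), produce extra independent cubics (violating the count), or contribute further lattice points to $\PP_G$ (inflating $h_1$ or $h_2$). Ruling out these cases will leave only a single bowtie or a single $6$-cycle, possibly with pendant trees attached, both of which give $c = 1$. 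Hence $K[G]$ is a hypersurface defined by a single cubic binomial.
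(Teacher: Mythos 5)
Your outline identifies the right ingredients (no $4$-cycles, primitive cubics come only from $6$-cycles and bowties, an Eisenbud--Goto-type count of generators), but it has two genuine gaps. First, the claim that the Hilbert series identity ``forces $p=c$'' and hence Cohen--Macaulayness is circular. In a $q$-linear resolution there is no cancellation between homological degrees, so the numerator $N(t)=1-\beta_1t^3+\cdots+(-1)^p\beta_pt^{p+2}$ has degree $p+2$, and writing $N(t)=h(t)(1-t)^c$ gives $\deg h=p+2-c$; since Auslander--Buchsbaum only gives $p\ge c$, concluding $p=c$ is exactly the assertion $\deg h=2$, which is what needs to be proved, not a consequence of bookkeeping. (Linear resolutions do not imply Cohen--Macaulayness in general.) The paper obtains Cohen--Macaulayness by a different route: the inequality $\reg(K[\Pc_G])\ge\deg(\Pc_G)$ for spanning polytopes (Hofscheier--Katth\"an--Nill) gives $\deg(\Pc_G)\le 2$, which forces any two odd cycles of $G$ to meet (Lemma \ref{lem:2odd}), hence the odd cycle condition, hence normality and Cohen--Macaulayness. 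Relatedly, your identification of the $h$-polynomial with the $\delta$-polynomial and of $h(1)$ with the normalized volume presupposes normality, which you have not established at that point.

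Second, the combinatorial core is only announced, not carried out: you write that ``the main obstacle is to show that for $c\ge 2$ no $C_4$-free graph can realize the prescribed $\delta$-polynomial'' and that ruling out the bad configurations ``will leave only a single bowtie or a single $6$-cycle.'' That is precisely the hard part of the theorem, and it occupies Lemmas \ref{star}, \ref{lem:even} and \ref{lem:6cycle} of the paper, where explicit interior lattice points are exhibited in dilations of the edge polytopes of each configuration (three mutually intersecting triangles, a long even cycle with a chord, two $6$-cycles in all possible overlap patterns) to force $\deg(\Pc_G)\ge 3$ and contradict $\reg=2$. The paper's endgame is also slightly cleaner than your proposed Hilbert-function matching: once at most two cubic generators survive, Lemma \ref{lem:EG} gives $\mu(I_G)=\binom{c+2}{3}$, and $\binom{c+2}{3}=2$ has no positive integer solution, so $\mu(I_G)=1$. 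To turn your sketch into a proof you would need to supply both the Cohen--Macaulayness argument and the case analysis you have deferred.
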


Achieving our proof of Theorem \ref{thm:lin}, we cannot overcome the temptation to give the following 

\begin{Conjecture}
	\label{thm:CON}
The edge ring of a finite connected simple graph with a $q$-linear resolution, where $q \geq 3$, is a hypersurface.
\end{Conjecture}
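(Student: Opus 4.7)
My plan is to attempt the conjecture by combining the numerical constraints from a pure linear resolution with a combinatorial analysis of primitive even closed walks, in the spirit of the forthcoming proof of Theorem \ref{thm:lin}.

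First I would set up the dictionary between generators of $I_G$ and primitive even closed walks of $G$: every minimal binomial generator of $I_G$ is of the form $\xb^{E^+(W)} - \xb^{E^-(W)}$ for such a walk $W$, and the $q$-linear hypothesis forces every such $W$ to have length exactly $2q$. The target is principality of $I_G$, i.e.\ the existence of an essentially unique primitive even closed walk of length $2q$ in $G$.

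Second, I would extract numerical consequences valid for every $q \ge 3$ at once. A pure $q$-linear resolution gives $\reg K[G] = q-1$; if one can show that it also forces $K[G]$ to be Cohen--Macaulay (which I expect to hold and would need to verify, for instance via the purity of the resolution together with normality arguments in the cases where $G$ satisfies the odd cycle condition), then comparing
$$
1 - \beta_1 t^q + \beta_2 t^{q+1} - \cdots \;=\; h(t)(1-t)^{c},
$$
where $c = \codim K[G]$, determines the $h$-vector of $K[G]$ in low degrees as $h_j = \binom{c+j-1}{j}$ for $0 \le j \le q-1$, and hence pins down the multiplicity as $\binom{c+q-1}{q-1}$. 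Since this multiplicity coincides with the normalized volume $\delta$ of $\Pc_G$, and since the hypersurface conclusion corresponds exactly to $c = 1$ (whence $\delta = q$ and $h = (1,1,\ldots,1)$), the remaining task is to rule out $c \ge 2$ uniformly in $q \ge 3$.

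Third, I would translate $c \ge 2$ into the statement that $G$ has cyclomatic number at least $2$ (bipartite case) or at least $3$ (non-bipartite case), so that $G$ contains two independent cycles. From this I would try to exhibit either a primitive even closed walk of length strictly less than $2q$ (contradicting the fact that $I_G$ is generated in degree exactly $q$), or two distinct primitive walks $W_1, W_2$ of length $2q$ whose S-pair produces a first syzygy of degree $\ge q+2$ (contradicting the linearity of the first-syzygy module). Either alternative yields the desired contradiction with the $q$-linear hypothesis, forcing $c = 1$ and hence the hypersurface conclusion.

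\textbf{Main obstacle.} The hardest step is clearly the last one. Two distinct primitive walks of length $2q \ge 6$ can share edges in many configurations, and the overlap combinatorics is significantly richer than for $q = 2$, where the Segre product picture of \cite{2linear} explains the flexibility. Proving that every such configuration produces either a shorter primitive walk or a non-linear first syzygy appears to require a careful case analysis, likely modelled on the proof of Theorem \ref{thm:lin} but now having to handle both bipartite and non-bipartite graphs for arbitrary $q$. An alternative route would be an induction on $q$, contracting or removing an edge lying in every primitive walk so as to reduce to the $(q-1)$-linear situation; verifying that such an operation preserves the $q$-linear hypothesis (shifted by one) is itself a non-trivial homological issue and is the second place where I expect genuine difficulty.
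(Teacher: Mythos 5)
The statement you are addressing is Conjecture \ref{thm:CON}, which the paper explicitly leaves open: the authors prove only the case $q=3$ (Theorem \ref{thm:lin}), so there is no proof in the paper to compare your attempt against, and your text is likewise a plan rather than a proof. Your architecture does faithfully mirror the $q=3$ argument --- establish Cohen--Macaulayness, invoke Lemma \ref{lem:EG} to get $\binom{c+q-1}{q}$ minimal generators where $c$ is the codimension, and then bound the number of degree-$q$ binomial generators combinatorially to force $c=1$ --- but both load-bearing steps are exactly the ones you flag as difficulties, and neither is routine.

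First, Cohen--Macaulayness. In the paper this is not deduced from purity of the resolution; it comes from Corollary \ref{lem:reg} ($\reg K[G] \ge \deg(\Pc_G)$) combined with Lemma \ref{lem:2odd}: a $3$-linear resolution gives $\deg(\Pc_G) \le 2$, which forces any two odd cycles of $G$ to meet, hence the odd cycle condition, hence normality and Cohen--Macaulayness (Corollary \ref{FHM}). For $q \ge 4$ one only gets $\deg(\Pc_G) \le q-1$, and since $q-1 \ge 3$ this no longer excludes two disjoint odd cycles, so the mechanism breaks and Cohen--Macaulayness of $K[G]$ is genuinely unresolved. Second, the terminal combinatorial step. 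For $q=3$ the paper can classify all primitive even closed walks of length $6$ (Lemma \ref{lem:gen}(b): only $C_6$ and $G_6$) and shows via Lemmas \ref{star}, \ref{lem:even} and \ref{lem:6cycle} that at most one of each can occur, so $I_G$ has at most two generators, while $\binom{c+2}{3}=2$ has no positive integer solution. For general $q$ there is no such classification of length-$2q$ primitive walks and no a priori small bound on their number, so the contradiction with $\binom{c+q-1}{q}$ for $c \ge 2$ must be manufactured some other way; your proposed alternatives (producing a shorter primitive walk, or a non-linear first syzygy from an S-pair) are plausible headings but are not carried out, and the overlap combinatorics you mention is precisely where the open problem lives. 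As it stands the proposal restates the conjecture's difficulty rather than resolving it.
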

 
When the edge ring $K[G]$ of a finite simple graph $G$ is studied, we follow the convention of assuming that $G$ is connected.  Let $G$ be a finite disconnected simple graph with the connected components $G_1, G_2, \ldots, G_s$ and suppose that each $G_i$ has at least one edge.  Then the edge ring of $G$ is 
$
K[G] = K[G_{1}] \otimes_{K} \cdots \otimes_{K} K[{G_{s}}]
$ and its toric ideal is
\[
(I_{G_1}, I_{G_2}, \ldots, I_{G_s})
\subset K[x_1^{(1)}, \ldots, x_{n_1}^{(1)}, x_1^{(2)}, \ldots, x_{n_2}^{(2)}, \ldots, x_1^{(s)}, \ldots, x_{n_s}^{(s)}].
\]
Let, say, $I_{G_1} \neq (0)$ and $I_{G_2} \neq (0)$.  Then $K[G]$ cannot have a linear resolution.  Hence $K[G]$ has a $d$-linear resolution if and only if there is $1 \leq i \leq s$ for which $K[G_i]$ has a $d$-linear resolution and each $K[G_j]$ with $i \neq j$ is the polynomial ring.  

In the present paper, after preparing necessary materials on edge polytopes and edge rings (Section $1$), regularity and linear resolutions (Section $2$), and $\delta$-polynomials and degrees of lattice polytopes (Section $3$), Theorem \ref{thm:lin} will be proved in Section $4$.

\section{Edge polytopes and Edge rings}
A {\em lattice polytope} is a convex polytope all of whose coordinates have integer coordinates.  Let $\Pc \subset \RR^N$ be a lattice polytope of dimension $d$ and $\Pc \cap \ZZ^N=\{\ab_1,\ldots,\ab_n \}$.  Let $K$ be a field and $K[\tb^{\pm1}, s]=K[t_1^{\pm1},\ldots,t_N^{\pm 1},s]$ the Laurent polynomial ring in $N + 1$ variables over $K$.  Given a lattice point  $\ab=(a_1,\ldots,a_N) \in \ZZ^N$, we write $\tb^{\ab}$ for the Laurent monomial $t_1^{a_1}\cdots t_N^{a_N} \in K[\tb^{\pm 1}, s]$.  The \textit{toric ring} $K[\Pc]$ of $\Pc$ is the subalgebra of $K[\tb^{\pm 1},s]$ which is generated by those monomials $\tb^{\ab_1}s,\ldots,\tb^{\ab_n}s$ over $K$.  Let $K[\xb]=K[x_1,\ldots,x_n]$ denote the polynomial ring in $n$ variables over $K$ and define the surjective ring homomorphism $\pi:K[\xb] \to K[\Pc]$ by setting $\pi(x_i)=\tb^{\ab_i}s$ for $1 \leq i \leq n$.  The kernel $I_{\Pc}$ of $\pi$ is called the \textit{toric ideal} of $\Pc$.

Let $G$ be a finite connected simple graph on the vertex set $V(G)=[N]$ with the edge set $E(G)=\{e_1,\ldots,e_n\}$.  Let $\eb_1,\ldots,\eb_N$ denote the canonical unit coordinate vectors of $\RR^N$.  Given an edge $e=\{i,j\}$ of $G$, we set $\rho(e)=\eb_i+\eb_j \in \RR^{N}$.  The \textit{edge polytope} $\Pc_G$ of $G$ is the lattice polytope which is the convex hull of $\{\rho(e_1),\ldots,\rho(e_n)\}$ in $\RR^N$.  One has $\dim \Pc_G = N - 1$ if $G$ has at least one odd cycle, and $\dim \Pc_G = N - 2$ if $G$ is bipartite.
%
%
%
The \textit{edge ring} $K[G]$ of $G$ is the toric ring of $\Pc_G$, that is, $K[G]=K[\Pc_G]$ and the toric ideal $I_G$ of $K[G]$ is the toric ideal of $\Pc_{G}$, that is, $I_{G} = I_{\Pc_{G}}$. 

Recall from \cite{2linear} what a system of generators of the toric ideal $I_G$ is.  A \textit{walk} of $G$ {\em of length $q$} connecting $v_1 \in V(G)$ and $v_{q+1} \in V(G)$ is a finite sequence of the form
\[
\Gamma=(\{v_1,v_2\},\{v_2,v_3\},\ldots,\{v_q,v_{q+1}\})
\]
with each $\{v_k, v_{k+1}\} \in E(G)$.  An \textit{even walk} is a walk of even length and a \textit{closed walk} is a walk such that $v_1=v_{q+1}$.  Given an even closed walk
\[
\Gamma=(e_{i_1},e_{i_2},\ldots,e_{i_{2q}})
\]
of $G$ with each $e_k \in E(G)$,
we write $f_{\Gamma}$ for the binomial
$$f_{\Gamma}=\prod_{k=1}^{q}x_{i_{2k-1}}-\prod_{k=1}^{q}x_{i_{2k}}$$
belonging to $I_G$,
where $\pi(x_i)=\tb^{\rho(e_i)}s$.

\begin{Lemma}
\label{evenwalk}
The toric ideal $I_G$ of a finite connected simple graph $G$ is generated by those binomials $f_{\Gamma}$, where $\Gamma$ is an even closed walk of $G$.
\end{Lemma}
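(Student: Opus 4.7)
The plan is to invoke the standard fact about toric ideals and then perform a combinatorial decomposition. By a general theorem on toric ideals (see, e.g., Sturmfels, \emph{Gr\"obner Bases and Convex Polytopes}, Lemma~4.1), $I_G$ is spanned as a $K$-vector space by all binomials $\xb^{\a}-\xb^{\b}$ with $\a,\b\in\NN^n$ satisfying $\pi(\xb^{\a})=\pi(\xb^{\b})$. By repeatedly factoring out any variable common to both monomials, I may further assume $\gcd(\xb^{\a},\xb^{\b})=1$. Writing $A$ for the multiset of edges $e_i$ taken with multiplicity $\a_i$ and $B$ for the multiset of edges $e_j$ taken with multiplicity $\b_j$, these multisets are then disjoint, and the identity $\sum_i\a_i\rho(e_i)=\sum_j\b_j\rho(e_j)$ (which follows from $\pi(\xb^{\a})=\pi(\xb^{\b})$, after matching the $s$-exponent) forces $\deg_A(v)=\deg_B(v)$ for every vertex $v$.

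The combinatorial heart of the argument is to decompose the edge-colored multigraph $A\cup B$ (with edges of $A$ colored red and edges of $B$ colored blue) into \emph{color-alternating} closed walks. Since every vertex has equal red and blue incidences, I pair them up arbitrarily at each vertex. Starting with any red edge and using these pairings to switch color at each transit vertex produces a closed walk of even length whose edges alternate red, blue, red, blue, $\ldots$; removing its edges and iterating yields a decomposition $A\cup B=\Gamma_1\sqcup\cdots\sqcup\Gamma_m$ into even closed walks of $G$ such that the odd-position edges of each $\Gamma_j$ are precisely its red edges and the even-position edges are its blue edges.

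Let $u_j$ and $v_j$ be the products of the variables corresponding to the red and blue edges of $\Gamma_j$, respectively, so that $f_{\Gamma_j}=u_j-v_j$, $\xb^{\a}=\prod_j u_j$, and $\xb^{\b}=\prod_j v_j$. The telescoping identity
\[
\prod_{j=1}^{m}u_j-\prod_{j=1}^{m}v_j=\sum_{j=1}^{m}\Bigl(\prod_{k<j}v_k\Bigr)(u_j-v_j)\Bigl(\prod_{k>j}u_k\Bigr)
\]
then writes $\xb^{\a}-\xb^{\b}$ as a $K[\xb]$-linear combination of the $f_{\Gamma_j}$, completing the proof.

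The step that requires care is the alternating-walk decomposition: one must verify that the greedy color-alternation procedure never gets stuck at a non-terminal vertex. This is guaranteed by the degree condition, since each transit through an intermediate vertex consumes one red and one blue incidence there and so preserves $\deg_A(v)=\deg_B(v)$ on the remaining graph; moreover the walk opens with a red edge and must close by returning to its start along a blue edge, giving the even length required for an even closed walk.
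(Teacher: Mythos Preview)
Your argument is correct and is essentially the standard proof of this well-known fact: reduce to coprime binomials via Sturmfels' Lemma~4.1, interpret the exponent identity $\sum_i\alpha_i\rho(e_i)=\sum_j\beta_j\rho(e_j)$ as saying that every vertex has equal red and blue multidegree in the multigraph $A\cup B$, decompose into color-alternating even closed walks, and telescope.

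There is, however, nothing to compare against: the paper does not prove this lemma. It is stated without proof, introduced by the sentence ``Recall from \cite{2linear} what a system of generators of the toric ideal $I_G$ is,'' i.e., it is quoted from Ohsugi--Hibi. You have supplied a valid self-contained proof where the paper simply cites the literature.

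One minor stylistic remark: your exposition switches between a deterministic pairing-based walk and a ``greedy'' one. Either works, but it would read more cleanly if you committed to one. In the greedy version, the key point---that the walk can only terminate at the starting vertex, and then necessarily after a blue edge---is exactly the parity bookkeeping you sketch in the final paragraph; in the pairing version, the map ``take the paired half-edge, then cross to the other endpoint'' is a bijection on half-edges whose orbits have even length, which gives the decomposition directly.
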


As a result, it follows that

\begin{Lemma}\label{lem:gen}
\textnormal{(a)} A finite connected simple graph $G$ has a $4$-cycle if and only if a homogeneous polynomial of $K[{\bf x}]$ of degree $2$ belongs to $I_G$.  

\textnormal{(b)} Let $\Gamma=(e_{i_1},\ldots,e_{i_6})$ be an even closed walk of $G$ of length $6$ with $f_{\Gamma} \in I_G$.
Then $\Gamma$ is either a $6$-cycle $C_6$ or the following{\rm :}
\begin{center}
	\begin{picture}(140,90)(90,90)
	\put(55,150){$G_6:$}
\put(75,130){$e_{i_2}$}
\put(170,150){$e_{i_4}$}
\put(170,105){$e_{i_6}$}
\put(215,130){$e_{i_5}$}
\put(120,105){$e_{i_3}$}
\put(120,150){$e_{i_1}$}
\put(90,160){\circle*{5}}
\put(90,100){\circle*{5}}
\put(150,130){\circle*{5}}
\put(210,160){\circle*{5}}
\put(210,100){\circle*{5}}
\put(90,160){\line(0,-1){60}}
\put(210,160){\line(0,-1){60}}
\put(90,160){\line(2,-1){60}}
\put(90,100){\line(2,1){60}}
\put(210,160){\line(-2,-1){60}}
\put(210,100){\line(-2,1){60}}
\end{picture}
\end{center}
\end{Lemma}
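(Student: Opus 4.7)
My plan is to handle parts (a) and (b) separately, in both cases relying on Lemma~\ref{evenwalk} to pass between elements of $I_G$ and even closed walks of $G$.

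For (a), the direction ``$G$ has a $4$-cycle $\Rightarrow (I_G)_2 \neq 0$'' is immediate: given a $4$-cycle with edges $e_{i_1}, e_{i_2}, e_{i_3}, e_{i_4}$, the binomial $f_\Gamma = x_{i_1}x_{i_3} - x_{i_2}x_{i_4}$ is a nonzero degree-$2$ element of $I_G$. For the converse, since the binomials $f_\Gamma$ supplied by Lemma~\ref{evenwalk} are homogeneous and $I_G$ is a homogeneous ideal, $(I_G)_2$ is $K$-spanned by those $f_\Gamma$ with $\Gamma$ an even closed walk of length $4$. If $(I_G)_2 \neq 0$, at least one such $f_\Gamma$ must be nonzero. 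Writing the walk as $(\{v_1,v_2\},\{v_2,v_3\},\{v_3,v_4\},\{v_4,v_1\})$, the absence of loops excludes $v_j = v_{j+1}$, and each remaining possible coincidence $v_1 = v_3$ or $v_2 = v_4$ forces two consecutive edges to coincide, so $f_\Gamma = 0$. Hence all four vertices are distinct and $\Gamma$ is a $4$-cycle.

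For (b), I would read the hypothesis ``$f_\Gamma \in I_G$'' as asserting that $f_\Gamma$ is a nonzero \emph{primitive} binomial, i.e.\ no edge-index is common to both sides of $f_\Gamma$; equivalently, no $e_{i_j}$ with $j$ odd coincides with any $e_{i_k}$ with $k$ even. Writing the walk with vertex sequence $v_1,\dots,v_6,v_7=v_1$, I would run a case analysis on possible equalities among the $v_j$. The equality $v_j = v_{j+1}$ is forbidden (no loops); the equality $v_j = v_{j+2}$ forces $e_{i_j} = e_{i_{j+1}}$, creating an index common to opposite parities of $\Gamma$ and violating primitivity; two distinct antipodal equalities $v_j = v_{j+3}$ and $v_k = v_{k+3}$ with $k \neq j$ again produce a cross-parity repeated edge. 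If no $v_j = v_{j+3}$ occurs either, then all $v_j$ are distinct and $\Gamma = C_6$. Otherwise there is exactly one antipodal equality $v_j = v_{j+3}$, under which $\Gamma$ splits into two closed $3$-walks through $v_j$; each must be a triangle (else a repeated cross-parity edge reappears) and the two triangles share only the vertex $v_j$, yielding $G_6$.

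The main technical obstacle is the case analysis in (b): one must verify, up to cyclic symmetry of $\Gamma$, that any pattern of vertex identifications other than ``at most one antipodal equality $v_j = v_{j+3}$'' forces an edge to appear on both sides of $f_\Gamma$, contradicting primitivity. Once this bookkeeping is carried out, the geometric conclusion ($C_6$ or $G_6$) follows immediately.
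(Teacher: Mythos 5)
Your argument is correct, and it is exactly the reduction the paper intends: the paper offers no proof of this lemma at all (it is introduced with ``As a result, it follows that''), the intended derivation being precisely your combination of Lemma~\ref{evenwalk} with a case analysis of vertex coincidences along the walk. The one substantive point is your reading of the hypothesis in (b). As literally written, ``$f_\Gamma\in I_G$'' is vacuous, and even strengthening it to ``$f_\Gamma\neq 0$'' would not save the statement: for two triangles $\{1,2,3\}$ and $\{1,2,4\}$ sharing the edge $\{1,2\}$, the length-$6$ walk through both gives $f_\Gamma=x_{12}(x_{13}x_{24}-x_{23}x_{14})\neq 0$, and $\Gamma$ is neither $C_6$ nor $G_6$. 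So your coprimality hypothesis (no variable common to the two monomials of $f_\Gamma$) is the correct reading; it is what your parity argument actually uses, it is exactly what fails in the example above, and it is satisfied by every minimal binomial generator of the prime ideal $I_G$, which is the only way the lemma is invoked in the proof of the main theorem. With that reading fixed, your case analysis (no $v_j=v_{j+1}$, no $v_j=v_{j+2}$, at most one antipodal identification $v_j=v_{j+3}$) is complete and yields $C_6$ or $G_6$ as claimed.
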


\section{Regularity and q-linear resolutions}


Let $S = K[x_{1}, \ldots, x_{n}]$ denote the polynomial ring in $n$ variables over a field $K$ with each $\deg x_{i} = 1$.  Let $I \subset S$ be a homogeneous ideal of $S$ and 
\[
{\bf F}_{S/I} : 0 \to \bigoplus_{j \geq 1} S(-a_{h_{j}})^{\beta_{h, j}} \to \cdots \to \bigoplus_{j \geq 1} S(-a_{1_{j}})^{\beta_{1, j}} \to S \to S/I \to 0
\]
a (unique) graded minimal free $S$-resolution of $S/I$.  The ({\em Castelnuovo-Mumford}\,) {\em regularity} of $S/I$ is 
\[
\reg (S/I) = \max\{ j - i : \beta_{i, j} \neq 0 \}. 
\] 
We say that $S/I$ has a {\em $q$-linear resolution} if $\beta_{i, j} = 0$ for each $1 \leq i \leq h$ and for each $j \neq q + i - 1$.  If $S/I$ has a $q$-linear resolution, then $\reg(S/I) = q - 1$ and $I$ is generated by homogeneous polynomials of degree $q$.  We refer the reader to, e.g., \cite{BH} and \cite{HH} for the detailed information about regularity and linear resolutions.  

%
%
%
%
%

\begin{Lemma}[{\cite[Proposition 1.7 (d)]{EG}}]
	\label{lem:EG}
If $S/I$ is Cohen--Macaulay and has a $q$-linear resolution, and if $c$ is the codimension of $S/I$, then the number of generators of $I$ is $\binom{c+q-1}{c-1}=\binom{c+q-1}{q}$.
\end{Lemma}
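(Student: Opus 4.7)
The plan is to reduce to an Artinian situation by modding out a maximal $(S/I)$-regular sequence of linear forms, and then to read off $\beta_1$ directly from the Hilbert function of the resulting quotient.

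First I would ensure that $K$ is infinite (extending it if necessary, which preserves both the Cohen--Macaulay property and the graded Betti numbers). Because $S/I$ is Cohen--Macaulay of dimension $n - c$, one can choose a sequence $\ell_1, \ldots, \ell_{n-c}$ of linear forms that is $(S/I)$-regular. Setting $\bar{S} = S/(\ell_1, \ldots, \ell_{n-c})$, a polynomial ring in $c$ variables, and $\bar{I} = I\bar{S}$, the standard fact that reduction modulo a regular sequence of linear forms preserves all graded Betti numbers implies that $\bar{S}/\bar{I}$ is Artinian, still admits a $q$-linear resolution, and has exactly the same number of minimal generators as $I$. This reduces the problem to the case $n = c$ with $\bar{S}/\bar{I}$ Artinian.

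The key observation is that in this reduced setting the resolution has length $c$ and ends in $\bar{S}(-q-c+1)^{\beta_c}$, so $\reg(\bar{S}/\bar{I}) = (q+c-1) - c = q - 1$. Since $\bar{S}/\bar{I}$ is Artinian, its Castelnuovo--Mumford regularity equals the top degree in which it is nonzero; hence $(\bar{S}/\bar{I})_j = 0$ for every $j \geq q$. On the other hand, because $\bar{I}$ is generated in degree $q$, we have $(\bar{S}/\bar{I})_j = \bar{S}_j$ for $0 \leq j \leq q - 1$.

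Finally I would extract $\beta_1$ by computing the Hilbert function in degree $q$ from the resolution. The contribution of $\beta_i$ for $i \geq 2$ vanishes in this degree (since $\bar{S}_{q - d_i} = 0$ for $d_i \geq q+1$), so
\[
0 = \dim_K (\bar{S}/\bar{I})_q = \dim_K \bar{S}_q - \beta_1 \dim_K \bar{S}_0 = \binom{q+c-1}{c-1} - \beta_1,
\]
which yields $\beta_1 = \binom{q+c-1}{c-1} = \binom{q+c-1}{q}$, as required. The only subtle input is the invariance of graded Betti numbers under reduction modulo a linear nonzerodivisor; once that standard result (see e.g.\ \cite{BH}) is granted, the rest is the short Hilbert function computation above, so I do not foresee a genuine obstacle.
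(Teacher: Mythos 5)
Your argument is correct. Note that the paper itself offers no proof of this statement -- it is quoted verbatim from Eisenbud--Goto \cite[Proposition 1.7 (d)]{EG} -- so there is nothing internal to compare against; what you have written is a complete, self-contained proof of the cited fact. The route you take (Artinian reduction via a linear regular sequence, then reading $\beta_1=\beta_{1,q}$ off the Hilbert function in degree $q$) is the standard one, and every step checks out: the Cohen--Macaulay hypothesis over an infinite field gives the linear system of parameters which is automatically a regular sequence; Betti numbers are preserved modulo a linear nonzerodivisor; for the Artinian quotient $\bar S/\bar I$ the regularity $q-1$ coincides with the top nonvanishing degree, so $(\bar S/\bar I)_q=0$; and in degree $q$ the alternating sum from the linear resolution collapses to $\dim_K\bar S_q-\beta_1$ because all shifts in homological degree $i\geq 2$ are at least $q+1$. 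The conclusion $\beta_1=\binom{q+c-1}{c-1}$ follows. The only hypotheses you quietly use beyond what is stated are $I\neq 0$ (so $c\geq 1$ and the resolution genuinely starts in degree $q$), which is implicit in the lemma as applied in the paper.
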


\section{Regularity of toric rings and degrees of lattice polytopes}
Recall that a matrix $A \in \ZZ^{N \times N}$ is $unimodular$ if $\det(A)=\pm 1$.  Given lattice polytopes $\mathcal{P} \subset \RR^N$ and $\mathcal{Q} \subset \RR^N$, we say that $\mathcal{P}$ and $\mathcal{Q}$ are {\em unimodularly equivalent} if there exist a unimodular matrix $U \in \ZZ^{N \times N}$ and a lattice point $\bold{w} \in \ZZ^N$ such that $\mathcal{Q}=f_U(\mathcal{P})+\bold{w}$, where $f_U$ is the linear transformation of $\RR^N$ defined by $U$, i.e., $f_U(\bold{v})=\bold{v}U$ for all $\bold{v} \in \RR^N$.  If $\Pc$ and $\Qc$ are unimodularly equivalent, then $K[\Pc] \cong K[\Qc]$.

Let $\Pc \subset \RR^N$ be a lattice polytope of dimension $d$.  The {\em $\delta$-polynomial} of $\Pc$ is the polynomial
\[
\delta(\Pc,\lambda)=(1-\lambda)^{d+1} \left[ 1+ \sum_{t=1}^{\infty} |t\Pc \cap \ZZ^N | \lambda^t \right]
\]
in $\lambda$, where $t\Pc=\{t \ab : \ab \in \Pc \}$.  Each coefficient of $\delta(\Pc, \lambda)$ is a nonnegative integer and the degree of $\delta(\Pc, \lambda)$ is at most $d$.  If $\Pc$ and $\Qc$ are unimodularly equivalent, then $\delta(\Pc,\lambda)=\delta(\Qc,\lambda)$.  Let $\deg(\Pc)$ denote the degree of $\delta(\Pc,\lambda)$ and set $\codeg(\Pc)=d+1-\deg(\Pc)$.
It then follows that
\[
\codeg(\Pc)= \min \{r \in \ZZ_{\geq 1} : \text{int}(r\Pc)\cap \ZZ^{N} \neq \emptyset \},
\]
where $\text{int}(\Pc)$ is the relative interior of $\Pc$ in $\RR^N$ and where $\ZZ_{\geq 1}$ stands for the set of positive integers.  We refer the reader to \cite[Part II]{HibiRedBook} for the detailed information about $\delta$-polynomials and their related topics.

\begin{Lemma}[{\cite[Stanley's Monotonicity-Theorem]{S}}]
	\label{lem:sta}
	Let $\Qc \subset \Pc \subset \RR^N$ be lattice polytopes. 
	Then $\deg(\Qc) \leq \deg(\Pc)$. 
\end{Lemma}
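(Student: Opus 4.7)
The plan is to use the characterization $\codeg(\Pc) = \min\{r \in \ZZ_{\geq 1} : \text{int}(r\Pc) \cap \ZZ^N \neq \emptyset\}$ recalled in Section~3 together with $\deg(\Pc) = \dim \Pc + 1 - \codeg(\Pc)$, which reduces the desired inequality $\deg(\Qc) \leq \deg(\Pc)$ to
\[
\codeg(\Pc) - \codeg(\Qc) \leq \dim \Pc - \dim \Qc.
\]
I would handle the equidimensional case directly and then reduce the general case to a codimension-one base case by induction.

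When $\dim \Qc = \dim \Pc$, the two polytopes share a common affine hull, and the elementary convex-geometric fact $\relint(\Qc) \subseteq \relint(\Pc)$ yields $\text{int}(r\Qc) \subseteq \text{int}(r\Pc)$ for every $r \geq 1$. Any $r$ witnessing $\codeg(\Qc)$ therefore also witnesses $\codeg(\Pc)$, giving even the stronger inequality $\codeg(\Pc) \leq \codeg(\Qc)$. For $\dim \Qc < \dim \Pc$, I would induct on $\dim \Pc - \dim \Qc$, the inductive step interpolating an intermediate polytope $\Rc := \con(\Qc \cup \{v\})$ of dimension $\dim \Qc + 1$, where $v$ is a lattice vertex of $\Pc$ with $v \notin \aff(\Qc)$; such a vertex must exist, for otherwise every vertex of $\Pc$, and hence $\Pc$ itself, would lie in $\aff(\Qc)$. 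This reduces everything to the base case $\dim \Pc = \dim \Qc + 1$, in which it suffices to show $\codeg(\Pc) \leq \codeg(\Qc) + 1$.

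For this base case, set $c = \codeg(\Qc)$ and choose $x \in \text{int}(c\Qc) \cap \ZZ^N$ together with a lattice vertex $v$ of $\Pc$ satisfying $v \notin \aff(\Qc)$; put $y := x + v$. Then $y \in \ZZ^N \cap (c+1)\Pc$. Writing $x = \sum_i \lambda_i (c q_i)$ as a strictly positive convex combination over the vertices $q_i$ of $\Qc$, one computes
\[
\frac{y}{c+1} \,=\, \sum_i \frac{c\lambda_i}{c+1}\,q_i \,+\, \frac{1}{c+1}\,v,
\]
which exhibits $y/(c+1)$ as a strictly positive convex combination of $\{q_i\} \cup \{v\} \subseteq \Pc$. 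If $y/(c+1)$ lay on some facet $F$ of $\Pc$, the supporting affine functional of $F$ would have to vanish on every $q_i$ and on $v$, forcing $\con(\Qc \cup \{v\}) \subseteq F$; but $\con(\Qc \cup \{v\})$ has dimension $\dim \Qc + 1 = \dim \Pc$, strictly greater than $\dim F$, a contradiction. Hence $y/(c+1) \in \text{int}(\Pc)$, equivalently $y \in \text{int}((c+1)\Pc)$, so $\codeg(\Pc) \leq c+1$.

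The main obstacle is the interior check in the base case: the key dimension comparison $\dim \con(\Qc \cup \{v\}) > \dim F$ is only available when $\dim \Pc - \dim \Qc = 1$, which is precisely why the induction is organized to reduce to that codimension. In higher codimension the convex hull $\con(\Qc \cup \{v\})$ could slip inside a facet of $\Pc$, and the naive construction $y = x + v$ would no longer be forced into the relative interior of $(c+1)\Pc$.
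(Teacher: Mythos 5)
Your argument is correct, but it is worth pointing out that the paper does not prove this lemma at all: it is quoted as Stanley's Monotonicity Theorem and imported wholesale from the reference [S], where Stanley establishes the much stronger fact that every coefficient of $\delta(\Qc,\lambda)$ is bounded above by the corresponding coefficient of $\delta(\Pc,\lambda)$, by algebraic means. What you prove is only the degree inequality, which is all the paper ever uses, and your route is genuinely different and more elementary: you translate the claim into $\codeg(\Pc)-\codeg(\Qc)\le \dim\Pc-\dim\Qc$ via the reciprocity formula $\codeg(\Pc)=\min\{r\in\ZZ_{\geq 1} : \text{int}(r\Pc)\cap\ZZ^N\neq\emptyset\}$ recorded in Section~3, and then verify that inequality by pure convex geometry. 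Both delicate points check out. In the equidimensional case the affine hulls coincide, so $\relint(\Qc)\subseteq\relint(\Pc)$ and a lattice point witnessing $\codeg(\Qc)$ witnesses $\codeg(\Pc)$. In the codimension-one base case, $y=x+v$ is a lattice point of $(c+1)\Pc$, and a supporting affine functional of a facet containing $y/(c+1)$ would have to vanish on all the $q_i$ and on $v$ because the convex coefficients are strictly positive, hence on $\con(\Qc\cup\{v\})$, which is impossible since that set has dimension $\dim\Qc+1=\dim\Pc$; so $y/(c+1)$ lies in the relative interior of $\Pc$ and $\codeg(\Pc)\le\codeg(\Qc)+1$. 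The interpolating polytope $\Rc=\con(\Qc\cup\{v\})$ correctly chains the general case down to these two. In short, your proof is a valid self-contained substitute for the citation, at the cost of delivering degree monotonicity only rather than Stanley's coefficientwise monotonicity of $\delta$-polynomials.
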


\begin{Corollary}
	\label{cor:deg}
	Let $G$ be a finite connected simple graph and let $G' \subset G$ be a connected subgraph of $G$.  Then $\deg(\Pc_{G'}) \leq \deg (\Pc_{G})$. 
\end{Corollary}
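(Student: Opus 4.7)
The plan is to realize $\Pc_{G'}$ as a lattice subpolytope of $\Pc_G$ inside a common ambient space and then invoke Stanley's Monotonicity Theorem (Lemma \ref{lem:sta}).

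First I would set $N = |V(G)|$ and view $\Pc_G \subset \RR^N$ in the usual way, as the convex hull of the vectors $\rho(e) = \eb_i + \eb_j$ for $e = \{i,j\} \in E(G)$. Since $G'$ is a subgraph of $G$, one has $V(G') \subset [N]$ and $E(G') \subset E(G)$. Hence the coordinate subspace $\RR^{V(G')} \subset \RR^N$ is cut out by the vanishing of the coordinates indexed by $[N] \setminus V(G')$, and under the canonical identification of $\RR^{V(G')}$ with this coordinate subspace, the edge polytope $\Pc_{G'}$ becomes the convex hull in $\RR^N$ of $\{\eb_i + \eb_j : \{i,j\} \in E(G')\}$. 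This identification is a lattice isomorphism onto its image, so it preserves the $\delta$-polynomial and in particular the degree.

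With this identification, every vertex of $\Pc_{G'}$ is a vertex of $\Pc_G$, so $\Pc_{G'} \subset \Pc_G$ as lattice polytopes in $\RR^N$. Applying Stanley's Monotonicity Theorem to this inclusion yields $\deg(\Pc_{G'}) \leq \deg(\Pc_G)$, which is the desired inequality. The proof is thus essentially a reduction to the already-quoted lemma; the only step requiring care is checking that viewing $\Pc_{G'}$ inside $\RR^N$ via the inclusion $V(G') \hookrightarrow [N]$ does not alter the degree, which follows from unimodular equivalence. I do not anticipate a genuine obstacle here, as the connectedness hypothesis on $G'$ plays no role in the argument beyond ensuring that $\Pc_{G'}$ is the edge polytope of a graph in the sense defined in Section~1.
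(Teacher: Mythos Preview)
Your proof is correct and follows exactly the approach intended by the paper: the corollary is stated immediately after Lemma~\ref{lem:sta} with no separate proof, so the paper's implicit argument is precisely the inclusion $\Pc_{G'} \subset \Pc_G$ in $\RR^N$ followed by Stanley's Monotonicity. Your extra remark that the embedding $\RR^{V(G')} \hookrightarrow \RR^N$ preserves the $\delta$-polynomial is a harmless clarification of a point the paper leaves tacit.
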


In general we say that a convex polytope of dimension $d$ is {\em full dimensional} if it embeds in $\RR^d$.
A full dimensional lattice polytope $\Pc \subset \RR^d$ satisfying the condition  
\begin{eqnarray}
\label{Boston}
\ZZ^{d+1} = \sum_{\ab \in \Pc \cap \ZZ^d} \ZZ (\ab, 1)
\end{eqnarray}
will be particularly of interest. 

\begin{Lemma}[{\cite[p. 6]{HKN}}]
	\label{lem:span}
	Let $\Pc \subset \RR^{d}$ be a full dimensional lattice polytope 
	which satisfies the condition $(\ref{Boston})$. 
	Then $\reg (K[\Pc]) \ge \deg (\Pc)$. 
\end{Lemma}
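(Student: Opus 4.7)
The plan is to compare $R := K[\Pc]$ with its normalization $A := \overline{K[\Pc]}$, to compute $\reg(A)$ exactly, and then to transfer the resulting lower bound back to $R$ via local cohomology. Because $\Pc$ is full-dimensional and the spanning condition $(\ref{Boston})$ holds, the subgroup of $\ZZ^{d+1}$ generated by $\{(\ab, 1) : \ab \in \Pc \cap \ZZ^d\}$ equals $\ZZ^{d+1}$, so $R$ and $A$ share the same quotient field. The standard normalization theory of affine semigroup rings identifies $A$ with the saturated semigroup ring $K[\cone(\Pc \times \{1\}) \cap \ZZ^{d+1}]$; by Hochster's theorem $A$ is Cohen--Macaulay, and its degree-$t$ graded piece has a $K$-basis indexed by $t\Pc \cap \ZZ^d$, so
$$H_A(\lambda) = \sum_{t \geq 0} |t\Pc \cap \ZZ^d| \, \lambda^t = \frac{\delta(\Pc, \lambda)}{(1-\lambda)^{d+1}}.$$
Since $A$ is Cohen--Macaulay of Krull dimension $d+1$, the standard identification of regularity with the degree of the Hilbert numerator yields $\reg(A) = \deg \delta(\Pc, \lambda) = \deg(\Pc)$.

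I then transfer this value to $R$ via the short exact sequence $0 \to R \to A \to C \to 0$ of graded $R$-modules. The conductor $\{r \in R : rA \subseteq R\}$ is a nonzero ideal of both $R$ and $A$ (nonzero because $A$ is module-finite over $R$ with the same quotient field) and annihilates $C$; hence $\dim C \leq d$ and $\Coh{d+1}{C} = 0$, where $\mm$ denotes the graded maximal ideal of $R$. The long exact local cohomology sequence then delivers the surjection
$$\Coh{d+1}{R} \twoheadrightarrow \Coh{d+1}{A},$$
from which one reads off $a_{d+1}(R) \geq a_{d+1}(A)$, where $a_i(M) := \max\{n : \Coh{i}{M}_n \neq 0\}$. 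Cohen--Macaulayness of $A$ gives $a_{d+1}(A) = \reg(A) - (d+1) = \deg(\Pc) - (d+1)$, and combining with the universal inequality $\reg(R) \geq a_{d+1}(R) + (d+1)$ completes the proof.

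The main obstacle is the first step, specifically the identification $A = K[\cone(\Pc \times \{1\}) \cap \ZZ^{d+1}]$ and the ensuing Ehrhart-type formula for $H_A(\lambda)$. Without the spanning condition $(\ref{Boston})$ the semigroup generated by the $(\ab, 1)$ could sit inside a proper sublattice of $\ZZ^{d+1}$, and the normalization would then only count lattice points of $t\Pc$ in that sublattice, severing the tie with $\delta(\Pc, \lambda)$. Once this identification is in hand, the remainder---Hochster's Cohen--Macaulay theorem, the degree-of-Hilbert-numerator formula for regularity of Cohen--Macaulay algebras, and the five-term local cohomology sequence---is routine.
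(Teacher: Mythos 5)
Your proof is correct. The paper gives no proof of this lemma --- it is quoted from \cite{HKN} --- and your normalization argument is the standard one (and essentially the one behind the cited reference): the spanning condition $(\ref{Boston})$ identifies the normalization $A$ of $K[\Pc]$ with the Ehrhart ring of $\Pc$, whose regularity equals $\deg(\Pc)$ by Hochster's theorem plus Artinian reduction, and since $A/K[\Pc]$ is a torsion module of dimension at most $d$, the long exact sequence yields the surjection $\Coh{d+1}{K[\Pc]} \twoheadrightarrow \Coh{d+1}{A}$ that transfers the bound. The only point worth stating explicitly is that $\Coh{d+1}{A} \neq 0$ (Grothendieck nonvanishing for the $(d+1)$-dimensional Cohen--Macaulay module $A$), which is needed for $a_{d+1}(A)$ to be attained and to equal $\reg(A) - (d+1)$.
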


\begin{Corollary}
	\label{lem:reg}
	Let $G$ be a finite connected simple graph on $[N]$.
	Then 
	\[
	\reg (K[\Pc_G]) \ge \deg (\Pc_G).
	\] 
\end{Corollary}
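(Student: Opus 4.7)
The plan is to reduce the claim to Lemma~\ref{lem:span} by transferring $\Pc_G$ to a unimodularly equivalent, full-dimensional model and verifying the spanning condition (\ref{Boston}) there. The obstacle is that $\Pc_G\subset\RR^N$ has dimension $d:=\dim \Pc_G<N$ (namely $N-1$ if $G$ has an odd cycle, and $N-2$ if $G$ is bipartite), whereas the hypothesis of Lemma~\ref{lem:span} requires a full-dimensional polytope. Since both $K[\Pc]$ and $\deg(\Pc)$ are invariants of unimodular equivalence (and of trivially appending zero coordinates), any such reduction is legitimate.

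To construct the model, note that $\Pc_G$ lies in the hyperplane $x_1+\cdots+x_N=2$, and, in the bipartite case with parts $V_1\sqcup V_2$, also in $\sum_{i\in V_1}x_i=1$. In the odd-cycle case, the substitution $x_N':=x_1+\cdots+x_N-2$ is unimodular, and composed with a lattice translation sends $\Pc_G$ into $\RR^{N-1}\times\{0\}$, which we identify with a full-dimensional polytope $\widetilde{\Pc}_G\subset\RR^{N-1}$. In the bipartite case, a second analogous substitution kills the extra equation and yields $\widetilde{\Pc}_G\subset\RR^{N-2}$.

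Next, one verifies that $\ZZ^{d+1}=\sum_{\ab\in\widetilde{\Pc}_G\cap\ZZ^d}\ZZ(\ab,1)$. Since $G$ is connected, some vertex $k$ is adjacent to $N$ (so a vector of the form $(\eb_k,1)$ lies in the image lattice), and combining this with the images of $(\rho(e),1)$ for the remaining edges produces a rank-$(d+1)$ sublattice. To show this sublattice is all of $\ZZ^{d+1}$, one uses the signed sum of $(\rho(e),1)$ around an odd cycle (contributing a vector of the form $(2\eb_v,1)$, which breaks the parity that would otherwise be the only potential obstruction) in the non-bipartite case, and, in the bipartite case, analogous closed-walk combinations suffice.

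Once the spanning condition is established, Lemma~\ref{lem:span} applied to $\widetilde{\Pc}_G$ yields $\reg(K[\widetilde{\Pc}_G])\geq\deg(\widetilde{\Pc}_G)$, and this transports back to $\Pc_G$ via the identifications above. The hardest part is the verification of (\ref{Boston}) for $\widetilde{\Pc}_G$: it demands a short but careful case analysis splitting on whether $G$ contains an odd cycle, since only in that case is the signed-odd-cycle trick available, while the bipartite situation requires an alternative combinatorial generating argument.
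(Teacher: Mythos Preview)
Your overall strategy matches the paper's: pass to a full-dimensional model $\widetilde{\Pc}_G$ by a unimodular change of coordinates (the paper simply drops one or two coordinates; your explicit substitution accomplishes the same thing), verify condition~(\ref{Boston}) there, and invoke Lemma~\ref{lem:span}. The divergence is entirely in how~(\ref{Boston}) is checked.

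The paper's verification is short and structural. In the non-bipartite case it chooses a connected spanning subgraph $H$ of $G$ with exactly $N$ edges and a unique (odd) cycle; then $\Psi(\Pc_H)\subset\RR^{N-1}$ is an $(N-1)$-simplex unimodularly equivalent to the standard one, so~(\ref{Boston}) holds for $\Psi(\Pc_H)$ and hence for the larger polytope $\Psi(\Pc_G)$. In the bipartite case a spanning tree $T$ plays the same role in $\RR^{N-2}$. No explicit lattice computation is needed.

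Your direct lattice-generation route is workable in principle, but as written it has real gaps. In the non-bipartite case you produce the single vector $(2\eb_v,1)$ and assert that it ``breaks parity,'' yet you never say which sublattice you already have or why this one extra vector completes it; what is actually needed is that the differences $\rho(e)-\rho(e')$ span the root lattice $\{x\in\ZZ^N:\sum x_i=0\}$, and the odd cycle is precisely what lets you reach $\eb_i-\eb_j$ for $i,j$ lying in different bipartition classes. In the bipartite case ``analogous closed-walk combinations suffice'' is not an argument: alternating sums around even closed walks vanish, so they contribute nothing, and the correct replacement is exactly the spanning-tree observation the paper uses. So the outline is right, but the verification of~(\ref{Boston}) still needs to be carried out; the cleanest fix is to adopt the paper's spanning-subgraph trick.
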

\begin{proof}
First, suppose that $G$ has at least one odd cycle and write $\Hc \subset \RR^N$ for the affine subspace spanned by $\Pc_G$.  We define the affine transformation $\Psi : \Hc \to \RR^{N-1}$ by setting 
	\[
	\Psi(a_1, a_2, \ldots, a_{N-1}, a_{N}) = (a_1, a_2, \ldots, a_{N-1}).
	\]
Then $\Psi(\Hc \cap \ZZ^N) = \ZZ^{N-1}$.  The image $\Psi(\Pc_G)$ of $\Pc_G$ is a full dimensional lattice polytope of $\RR^{N-1}$ with $K[\Pc_G] \iso K[\Psi(\Pc_G)]$ and $\delta(\Pc_G,\lambda) = \delta(\Psi(\Pc_G),\lambda)$.  Let $H$ be a connected spanning subgraph of $G$ such that $H$ has exactly $N$ edges and that $H$ has exactly one cycle which is odd. Note that we can obtain such a connected spanning subgraph $H$ by taking a spanning tree of $G$ and adding an edge of $G$ to the spanning tree.   Then $\Psi(\Pc_H) \subset \RR^{N-1}$ is a full dimensional simplex which is unimodularly equivalent to the full dimensional standard simplex of $\RR^{N-1}$.  Hence $\Psi(\Pc_H)$ satisfies the condition $(\ref{Boston})$.  In particular $\Psi(\Pc_G)$ satisfies the condition $(\ref{Boston})$.  Lemma \ref{lem:span} now guarantees $\reg (K[\Pc_G]) \ge \deg (\Pc_G)$, as desired.

Second, suppose that $G$ is a bipartite graph with the partition $[N]=U \cup V$, where $1 \in U$ and $N \in V$.  Let $\Hc \subset \RR^N$ denote the affine subspace spanned by $\Pc_G$ and define the affine transformation $\Psi' : \RR^N \to \RR^{N-2}$ by setting 
	\[
	\Psi(a_1, a_2, \ldots, a_{N-1}, a_{N}) = (a_2, \ldots, a_{N-1}).
	\]
Then $\Psi'(\Hc \cap \ZZ^N) = \ZZ^{N-2}$.  The image $\Psi'(\Pc_G)$ of $\Pc_G$ is a full dimensional lattice polytope of $\RR^{N-2}$ with $K[\Pc_G] \iso K[\Psi'(\Pc_G)]$ and $\delta(\Pc_G,\lambda) = \delta(\Psi'(\Pc_G),\lambda)$.  Let $T$ be a spanning tree of $G$.  Then $\Psi'(\Pc_T) \subset \RR^{N-2}$ is a full dimensional simplex which is unimodularly equivalent to the full dimensional standard simplex of $\RR^{N-2}$.  Hence $\Psi'(\Pc_T)$ satisfies the condition $(\ref{Boston})$.  In particular $\Psi'(\Pc_G)$ satisfies the condition $(\ref{Boston})$.  Lemma \ref{lem:span} now guarantees $\reg (K[\Pc_G]) \ge \deg (\Pc_G)$, as required.
\end{proof}

\section{Proof of main theorem}
In order to prove Theorem \ref{thm:lin}, a few lemmata must be prepared.
Let, as before, $G$ be a finite connected simple graph on $[N]$. 
Recall that if $K[G]$ has a $3$-linear resolution, then one has $\reg(K[G]) = 2 \geq  \deg(\Pc_G)$ from Corollary \ref{lem:reg} and $I_{G}$ is generated by homogeneous polynomials of degree $3$.  Especially, $G$ has no $4$-cycle and has no even cycle of length $\geq 8$ without chord (\cite[Lemma 3.3]{2linear}).
By computing $\deg(\Pc_G)$, we consider how many even closed walks as in Lemma \ref{lem:gen} (b) there can exist in $G$ when  $K[G]$ has a $3$-linear resolution. In particular, we give a upper bound of the number of generators of $I_G$.
First, we focus on $G_6$.
\begin{Lemma}
	\label{lem:2odd}
If $G$ has at least two disjoint odd cycles, then $\deg (\Pc_{G}) \geq 3$.
\end{Lemma}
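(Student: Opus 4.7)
The strategy is to apply Stanley's monotonicity theorem (Lemma \ref{lem:sta}) to a judiciously chosen sub--lattice polytope of $\Pc_G$. Let $C$ and $C'$ be two disjoint odd cycles in $G$ of lengths $2a+1$ and $2b+1$ with $a,b\geq 1$, and let $\Gamma$ denote the subgraph of $G$ on $[N]$ whose edge set is $E(C)\cup E(C')$. Since $\Pc_\Gamma\subseteq\Pc_G$ inside $\RR^N$, Lemma \ref{lem:sta} yields $\deg(\Pc_\Gamma)\leq\deg(\Pc_G)$, so it is enough to prove $\deg(\Pc_\Gamma)\geq 3$.

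Because $V(C)$ and $V(C')$ sit in disjoint coordinate subspaces of $\RR^N$, the polytope $\Pc_\Gamma$ is the join of the two odd-cycle edge simplices $\Pc_C$ and $\Pc_{C'}$, and is itself a simplex of dimension $2a+2b+1$. Using the identity $\codeg(\Pc_\Gamma)=\dim\Pc_\Gamma+1-\deg(\Pc_\Gamma)$, it suffices to exhibit an interior lattice point of the dilation $(a+b+1)\Pc_\Gamma$, since this gives $\codeg(\Pc_\Gamma)\leq a+b+1$ and hence $\deg(\Pc_\Gamma)\geq a+b+1\geq 3$.

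Relabeling vertices if necessary, assume $V(C)=\{1,\dots,2a+1\}$ and $V(C')=\{2a+2,\dots,2a+2b+2\}$. The barycenters of $\Pc_C$ and $\Pc_{C'}$ are
\[
\tfrac{2}{2a+1}(\eb_1+\cdots+\eb_{2a+1}) \quad\text{and}\quad \tfrac{2}{2b+1}(\eb_{2a+2}+\cdots+\eb_{2a+2b+2}),
\]
each lying in the relative interior of the corresponding simplex. Taking their convex combination with weights $t=\tfrac{2a+1}{2(a+b+1)}$ and $1-t=\tfrac{2b+1}{2(a+b+1)}$ telescopes to $\tfrac{1}{a+b+1}(\eb_1+\cdots+\eb_{2a+2b+2})$. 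Scaling by $a+b+1$, this exhibits the integer point $p=\eb_1+\cdots+\eb_{2a+2b+2}\in\ZZ^N$ as a lattice point of $(a+b+1)\Pc_\Gamma$, and it lies in the relative interior by the construction.

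The one step requiring a brief justification is the claim that a convex combination (with weight strictly in $(0,1)$) of relative interior points of the two join factors lies in the relative interior of the join $\Pc_\Gamma$; this is standard for the polytope join of two lattice polytopes whose affine hulls lie in complementary coordinate subspaces, which is exactly our situation because the supports of $\Pc_C$ and $\Pc_{C'}$ are disjoint. Everything else is a direct computation with $\codeg$ and Stanley's monotonicity.
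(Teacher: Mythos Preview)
Your proof is correct and follows essentially the same approach as the paper's: both identify the all-ones vector $\eb_1+\cdots+\eb_{2a+2b+2}$ as an interior lattice point of $(a+b+1)\Pc_\Gamma$, deduce $\codeg(\Pc_\Gamma)\leq a+b+1$ and hence $\deg(\Pc_\Gamma)\geq a+b+1\geq 3$, and then apply Stanley's monotonicity (Lemma~\ref{lem:sta}). The only difference is that you justify the interior-point claim via the join/barycenter decomposition, whereas the paper asserts it directly; in fact, since $\Pc_\Gamma$ is itself a simplex with $2(a+b+1)$ vertices, the point in question is simply its barycenter scaled by $a+b+1$, so the join argument, while correct, is not strictly needed.
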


\begin{proof}
Let $C$ be a cycle of $G$ of length $2k+1$ and $C'$ a cycle of $G$ of length $2\ell+1$ for which $V(C) \cap V(C') = \emptyset$.  We introduce the lattice polytope $\Qc \subset \RR^{2k+2\ell+2}$ which is the convex hull of $\{ \rho(e) \in \RR^{2k+2\ell +2} : e \in E(C) \cup E(C') \}$.  Since $\dim \Qc = 2k + 2\ell + 1$ and since $(1,1, \ldots, 1)$ belongs to $\mathrm{int}((k+\ell+1)\Qc) \cap \ZZ^{2k+2\ell+2}$, it follows that $\codeg (\Qc) \leq k + \ell + 1$ and $\deg (\Qc) \geq k + \ell + 1 \geq 3$.  By using Lemma \ref{lem:sta}, one has $\deg (\Pc_{G}) \geq 3$, as desired.
\end{proof}
From this lemma if $K[G]$ has a $3$-linear resolution, then any two odd cycles of $G$ have a common vertex.
In particular, we get the following corollary.
\begin{Corollary}
\label{FHM} 
If $\deg (\Pc_{G}) \leq 2$, then any two odd cycles of $G$ have a common vertex.  In particular, $G$ satisfies the odd cycle condition $($\cite[p.~410]{OH}$)$ and $K[G]$ is Cohen--Macaulay. 
\end{Corollary}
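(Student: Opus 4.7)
The plan is to deduce the first assertion directly from Lemma \ref{lem:2odd} by contraposition, and then to invoke the results of Ohsugi--Hibi \cite{OH} together with Hochster's theorem to conclude Cohen--Macaulayness.

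For the first assertion, suppose for contradiction that $G$ possesses two odd cycles $C$ and $C'$ with $V(C) \cap V(C') = \emptyset$. Lemma \ref{lem:2odd} then yields $\deg(\Pc_G) \geq 3$, contradicting the hypothesis $\deg(\Pc_G) \leq 2$. Hence any two odd cycles of $G$ must share at least one vertex.

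For the second assertion, recall that the \emph{odd cycle condition} of \cite{OH} requires that for every pair of vertex-disjoint odd cycles $C, C'$ of $G$ there exists an edge of $G$ connecting a vertex of $C$ to a vertex of $C'$. Since we have just shown that $G$ admits no two vertex-disjoint odd cycles at all, this condition is satisfied vacuously. By the theorem of Ohsugi--Hibi \cite{OH}, the odd cycle condition implies that $K[G]$ is normal, and Hochster's theorem then guarantees that $K[G]$ is Cohen--Macaulay.

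No serious obstacle is anticipated: the first claim is an immediate contrapositive of Lemma \ref{lem:2odd}, and the second is a direct citation of established results. The only point requiring care is the vacuous verification of the odd cycle condition, which is transparent once the first assertion is in hand.
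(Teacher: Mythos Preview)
Your proof is correct and follows the same route the paper intends: the first assertion is the contrapositive of Lemma~\ref{lem:2odd}, and the Cohen--Macaulayness is obtained via the odd cycle condition (vacuously satisfied), normality from \cite{OH}, and Hochster's theorem. The paper itself states this corollary without proof, treating it as an immediate consequence of Lemma~\ref{lem:2odd}, so your argument simply makes explicit what the authors leave implicit.
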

Hence, if $K[G]$ has a $3$-linear resolution, then $K[G]$ is Cohen-Macaulay.

Now, we show how many subgraphs of the form $G_6$ there exist in $G$, when  $K[G]$ has a $3$-linear resolution. In fact, the following lemma implies that $G$ has at most one subgraph of the form $G_6$.
\begin{Lemma}
\label{star}
If $G$ has at least three $3$-cycles and has no $4$-cycle, then $\deg(\Pc_{G}) \geq 3$.
\end{Lemma}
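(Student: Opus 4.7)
The plan is to reduce to the case where the three $3$-cycles share a common vertex, so that the friendship graph $F_3$ (three triangles glued at one vertex) appears as a subgraph of $G$, and then to exhibit an interior lattice point of $4\Pc_{F_3}$. First I dispose of the case in which two of the three given $3$-cycles are vertex-disjoint by invoking Lemma \ref{lem:2odd} directly. So assume every pair of the three cycles shares a vertex. The absence of a $4$-cycle forces any two of our triangles to share \emph{exactly} one vertex: two triangles sharing an edge $\{x,y\}$ would produce a $4$-cycle through $x$, $y$ and the two remaining vertices. Write $T_1 \cap T_2 = \{a\}$, $T_2 \cap T_3 = \{b\}$, $T_1 \cap T_3 = \{c\}$. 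I will argue $a = b = c$: if $a, b, c$ were pairwise distinct, then $\{a,b\}, \{b,c\}, \{a,c\} \in E(G)$ (each such pair lies together in one of the triangles), and letting $u$ be the third vertex of $T_1$ (necessarily distinct from $a,b,c$), the sequence $a - u - c - b - a$ would be a $4$-cycle in $G$, contradicting the hypothesis. So some two of $a,b,c$ coincide, which forces all three equal and gives a common vertex $v \in T_1 \cap T_2 \cap T_3$.

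At this point, the subgraph $F \subset G$ with vertex set $V(T_1) \cup V(T_2) \cup V(T_3)$ and edge set $E(T_1) \cup E(T_2) \cup E(T_3)$ is isomorphic to the friendship graph on $7$ vertices and $9$ edges, since the six non-$v$ vertices are pairwise distinct (any coincidence would force two triangles to share two vertices). Since $\Pc_F$ embeds as a lattice subpolytope of $\Pc_G$, Corollary \ref{cor:deg} reduces the task to proving $\deg(\Pc_F) \geq 3$, equivalently $\codeg(\Pc_F) \leq 4$ (note $\dim \Pc_F = 6$). Labeling $v$ as coordinate $1$ and the remaining six vertices as coordinates $2, \ldots, 7$, I plan to verify that the lattice point $(2,1,1,1,1,1,1) \in \ZZ^7$ lies in $\mathrm{int}(4\Pc_F)$: placing weight $\tfrac{1}{3}$ on each of the six edges incident to $v$ and weight $\tfrac{2}{3}$ on each of the three remaining edges gives strictly positive coefficients summing to $4$, and a short calculation shows they reproduce the coordinate vector $(2,1,1,1,1,1,1)$.

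The main obstacle is the combinatorial reduction: ruling out the configuration in which the three pairwise intersections are distinct vertices using only the no-$4$-cycle hypothesis. Once $F_3$ is isolated as a subgraph, the polytopal step is a short, symmetry-guided search for a single interior lattice point of $4\Pc_{F_3}$.
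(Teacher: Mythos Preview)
Your proof is correct and follows essentially the same approach as the paper: reduce via Lemma~\ref{lem:2odd} to three pairwise-intersecting triangles, use the no-$4$-cycle hypothesis to force them into the friendship graph $F_3$, and then conclude via Corollary~\ref{cor:deg}. Your argument is in fact more explicit than the paper's, which simply asserts $\deg(\Pc_{G'})=3$ for the friendship graph without exhibiting the interior lattice point of $4\Pc_{G'}$ that you write down.
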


\begin{proof}
By virtue of Lemma \ref{lem:2odd}, we may assume that $G$ has a subgraph $G'$ which is the union of three $3$-cycles $C_1, C_2$ and $C_3$ for which $V(C_i) \cap V(C_j) \neq \emptyset$ for $1 \leq i < j \leq 3$.  Since $G$ has no $4$-cycle, one has $E(C_i) \cap E(C_j) = \emptyset$ for $1 \leq i < j \leq 3$.  Let $i_1, i_2$ and $i_3$ be vertices of $G$ with $i_1,i_2 \in V(C_1)$, $i_2,i_3 \in V(C_2)$ and $i_1, i_3 \in V(C_3)$.  It then follows that either $i_1=i_2=i_3$ or $|\{i_1, i_2, i_3\}| = 3$.  However, if $|\{i_1, i_2, i_3\}| = 3$, then $G$ has a $4$-cycle.  As a result, one has $i_1=i_2=i_3$ and $G'$ is the following graph:  
\begin{center}
	\begin{picture}(140,90)(90,100)
	\put(130,170){\circle*{5}}
	\put(170,170){\circle*{5}}
	\put(150,130){\circle*{5}}
	\put(110,150){\circle*{5}}
	\put(110,110){\circle*{5}}
	\put(190,150){\circle*{5}}
	\put(190,110){\circle*{5}}
	\put(150,130){\line(-1,2){20}}
	\put(150,130){\line(1,2){20}}
	\put(150,130){\line(2,-1){40}}
	\put(150,130){\line(2,1){40}}
	\put(150,130){\line(-2,-1){40}}
	\put(150,130){\line(-2,1){40}}
	\put(110,150){\line(0,-1){40}}
		\put(190,150){\line(0,-1){40}}
			\put(130,170){\line(1,0){40}}
	\end{picture}
\end{center}
	Since $\deg(\Pc_{G'})=3$, by using Corollary \ref{cor:deg}, one has $\deg (\Pc_{G}) \geq 3$, as required.
\end{proof}

Next, we focus on $C_6$.
The following lemma implies that $G$ has no even cycles expect for $6$-cycles as subgraph when $K[G]$ has a $3$-linear resolution.

\begin{Lemma}
	\label{lem:even}
	If $G$ has an even cycle of length $\geq 8$ with a chord and has no $4$-cycle, then $\deg(\Pc_G) \geq 3$.
\end{Lemma}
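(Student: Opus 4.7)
The plan is to apply Corollary \ref{cor:deg} by exhibiting a connected subgraph $G' \subseteq G$ whose edge polytope already satisfies $\deg(\Pc_{G'}) \geq 3$. The natural candidate is the given even cycle $C$ of length $2k \geq 8$ itself, and in fact neither the chord nor the no-$4$-cycle hypothesis will be needed in the core computation. Since $C$ is bipartite, $\dim \Pc_C = 2k - 2$, and the identity $\deg(\Pc_C) + \codeg(\Pc_C) = 2k - 1$ reduces the task to proving $\codeg(\Pc_C) \leq k$, which will give $\deg(\Pc_C) \geq k - 1 \geq 3$ since $k \geq 4$.

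To show $\codeg(\Pc_C) \leq k$, I exhibit an interior lattice point in $k \Pc_C$. Let $\mathbf{w} \in \ZZ^N$ be the $0/1$ vector with $w_i = 1$ for $i \in V(C)$ and $w_i = 0$ otherwise. Since each vertex of $C$ has degree $2$ inside $C$, one has
\[
\sum_{e \in E(C)} \rho(e) = 2\,\mathbf{w},
\]
and dividing by $|E(C)| = 2k$ yields
\[
\frac{\mathbf{w}}{k} = \frac{1}{2k} \sum_{e \in E(C)} \rho(e),
\]
which is the vertex centroid of $\Pc_C$ written as a convex combination in which each of the $2k$ vertices of $\Pc_C$ appears with the strictly positive weight $1/(2k)$. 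Hence $\mathbf{w}/k$ lies in the relative interior of $\Pc_C$, so $\mathbf{w}$ lies in the relative interior of $k \Pc_C$, and therefore $\codeg(\Pc_C) \leq k$.

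Applying Corollary \ref{cor:deg} to the connected subgraph $C \subset G$ then concludes the argument: $\deg(\Pc_G) \geq \deg(\Pc_C) \geq k - 1 \geq 3$. The only point that requires even mild care is the standard fact that a convex combination of vertices of a polytope in which every weight is strictly positive must lie in the relative interior; beyond this verification the proof is essentially a one-line calculation, so I do not anticipate a genuine obstacle.
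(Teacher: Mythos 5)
Your proof is correct, but it follows a genuinely different (and leaner) route than the paper. The paper keeps the chord: it works with the subgraph $C_{k,\ell}$ consisting of the $2k$-cycle together with the chord $\{1,\ell\}$, exhibits the lattice point $(\eb_1+\cdots+\eb_{2k})+(\eb_1+\eb_\ell)$ in the interior of the $(k+1)$-st dilate, and obtains the bound $\deg(\Pc_{C_{k,\ell}})\geq k-1$ or $k-2$ according to the parity of $\ell$; since the bound degenerates to $2$ when $(k,\ell)=(4,4)$, the paper must invoke the no-$4$-cycle hypothesis to exclude exactly that case (the chord $\{1,4\}$ of an $8$-cycle creates a $4$-cycle). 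You instead discard the chord, take $\Qc=\Pc_C$ for the bare $2k$-cycle, and observe that the all-ones vector on $V(C)$ equals $\tfrac{1}{2}\sum_{e\in E(C)}\rho(e)$, hence is an interior lattice point of $k\Pc_C$; together with $\dim\Pc_C=2k-2$ this gives $\codeg(\Pc_C)\leq k$ and $\deg(\Pc_C)\geq k-1\geq 3$ with no case distinction. Your interior-point justification (a convex combination with all weights strictly positive of a generating set of a polytope lies in its relative interior) is the standard supporting-hyperplane argument and is sound, and the appeal to Corollary \ref{cor:deg} is legitimate since the cycle is a connected subgraph. What your approach buys is a strictly stronger statement --- any even cycle of length $\geq 8$, chord or no chord, already forces $\deg(\Pc_G)\geq 3$ --- and it makes both the ``with a chord'' and ``no $4$-cycle'' hypotheses superfluous for this particular lemma (they remain relevant elsewhere in the paper, where chordless long even cycles are excluded by the degree of the generators of $I_G$). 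The paper's version, by carrying the chord, aims at a sharper parity-sensitive formula for $\deg(\Pc_{C_{k,\ell}})$, but only the inequality is actually established and only the inequality is needed.
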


\begin{proof}
Let $C_{k,\ell}$ denote the subgraph of $G$ with
\[
E(C_{k,\ell}) = \{\{1,2\}, \{2,3\},\ldots,\{2k-1,2k\},\{1,2k\}, \{1,\ell\} \},
\]
where $k \geq 4$ and $3 \leq \ell \leq k+1$.  One has   
	\begin{displaymath}
		\dim (\Pc_{C_{k,\ell}})=\begin{cases}
		2k-1 &(\text{if\ } \ell \text{\ is odd}),\\
		2k-2 &(\text{if\ } \ell \text{\ is even}).
		\end{cases}
	\end{displaymath}
Since $(\eb_1+\cdots+\eb_{2k})+(\eb_1+\eb_{\ell}) \in \text{int}((k+1)\Pc_{C_{k,\ell}}) \cap \ZZ^{2k}$, it follows that
	\begin{displaymath}
	\deg (\Pc_{C_{k,\ell}})=\begin{cases}
	k-1 &(\text{if\ } \ell \text{\ is odd}),\\
	k-2 &(\text{if\ } \ell \text{\ is even}).
	\end{cases}
	\end{displaymath}
	Since $G$ has no $4$-cycle, one has $(k,\ell) \neq (4,4)$.  Hence $\deg(\Pc_{C_{k,\ell}}) \geq 3$.  Again Corollary \ref{cor:deg} guarantees that $\deg (\Pc_{G}) \geq 3$, as desired.
\end{proof}

Now, we show how many subgraphs of the form $C_6$ there exist in $G$, when  $K[G]$ has a $3$-linear resolution. In fact, the following lemma and Lemma \ref{lem:even} imply that $G$ has at most one subgraph of the form $C_6$.

\begin{Lemma}
	\label{lem:6cycle}
	Suppose that
	\begin{enumerate}
		\item[{\rm (i)}] $G$ has at least two $6$-cycles{\rm ;}
		\item[{\rm (ii)}] $G$ has no $4$-cycle{\rm ;}
		\item[{\rm (iii)}] $G$ has no even cycle of length $\geq 8$. 
	\end{enumerate}
Then $\deg(\Pc_G) \geq 3$.
\end{Lemma}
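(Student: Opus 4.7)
My plan is to exhibit a connected subgraph $G' \subseteq G$ containing $C_1 \cup C_2$ with $\deg(\Pc_{G'}) \geq 3$, and then invoke Corollary \ref{cor:deg}. The key tool is an averaging observation: if $c : E(G') \to \QQ_{>0}$ is a strictly positive weighting with $\ab := \sum_{e} c_e \rho(e) \in \ZZ^{N}$ and $r := \sum_{e} c_e \in \ZZ_{\geq 1}$, then $\ab/r$ is a strict convex combination of every vertex of $\Pc_{G'}$, so $\ab/r$ lies in no facet of $\Pc_{G'}$ and hence $\ab \in \text{int}(r\,\Pc_{G'}) \cap \ZZ^{N}$. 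This forces $\codeg(\Pc_{G'}) \leq r$, so $\deg(\Pc_{G'}) \geq \dim \Pc_{G'} + 1 - r$. In particular, when $G'$ is Eulerian, the uniform weighting $c_e = 1/2$ produces an integer vector $\ab = \sum_v (\deg_{G'}(v)/2)\,\eb_v$ with $r = |E(G')|/2$.

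I split by $S := V(C_1) \cap V(C_2)$. If $|S| = 0$, choose a shortest path $P$ in $G$ from $V(C_1)$ to $V(C_2)$ (its interior avoids $C_1 \cup C_2$ by minimality), and set $G' := C_1 \cup P \cup C_2$, which is connected and bipartite with $\dim \Pc_{G'} = 9 + |E(P)|$; weighting cycle edges by $1/2$ and path edges by $1$ yields an integer $\ab$ with $r = 6 + |E(P)|$, hence $\deg(\Pc_{G'}) \geq 4$. If $|S| = 1$, then $G' := C_1 \cup C_2$ is connected, bipartite, and Eulerian with $11$ vertices and $12$ edges; $\dim = 9$, $r = 6$, so $\deg(\Pc_{G'}) \geq 4$. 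If $|S| = 2$ with the two shared vertices $u, v$ non-adjacent in both cycles, then $G' := C_1 \cup C_2$ is Eulerian with $10$ vertices and $12$ edges; the pair of distances $(d_{C_1}(u,v), d_{C_2}(u,v)) \in \{2,3\}^2$ cannot be $(2,2)$, since two distinct common neighbors would form a $4$-cycle, contradicting (ii); and in the remaining subcases $(2,3)$ (where $G'$ is non-bipartite, $\dim = 9$) and $(3,3)$ (where $G'$ is bipartite, $\dim = 8$), the value $r = 6$ yields $\deg(\Pc_{G'}) \geq 3$.

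It remains to rule out (a) $C_1$ and $C_2$ sharing an edge, and (b) $|S| \geq 3$. For (a), assuming $|S| = 2$ the two length-$5$ subpaths of $C_1$ and $C_2$ between the endpoints of the shared edge are internally vertex-disjoint, so they combine into a $10$-cycle of $G$, contradicting (iii); the case of further shared internal vertices collapses into (b). For (b), the shared vertices cut each of $C_1, C_2$ into $|S|$ arcs whose lengths partition $6$, and a short enumeration of the pattern pairs shows that whenever the two patterns disagree, or agree with distinct intermediate vertices, some pair in $S$ acquires two distinct common neighbors in the two cycles and therefore produces a forbidden $4$-cycle, while matched patterns forcing shared intermediate vertices create additional shared edges and hence reduce to (a). The main technical obstacle is precisely this final enumeration in (b); once it is complete, Corollary \ref{cor:deg} gives $\deg(\Pc_G) \geq \deg(\Pc_{G'}) \geq 3$, as desired.
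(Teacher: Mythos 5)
Your averaging device --- strictly positive edge weights $c_e$ with $\sum_e c_e\rho(e)\in\ZZ^N$ and $r=\sum_e c_e\in\ZZ_{\ge 1}$ yielding a point of $\text{int}(r\Pc_{G'})\cap\ZZ^N$, hence $\deg(\Pc_{G'})\ge \dim\Pc_{G'}+1-r$ --- is correct and is a tidy systematization of the interior-point computations the paper performs ad hoc. It disposes of $|V(C_1)\cap V(C_2)|\le 1$ and most of $|V(C_1)\cap V(C_2)|=2$ cleanly. (One small omission in the latter: you only enumerate $(d_{C_1}(u,v),d_{C_2}(u,v))\in\{2,3\}^2$, missing the case where $\{u,v\}$ is an edge of one cycle but not of the other; there $(1,3)$ is killed by a $4$-cycle and $(1,2)$ by your Eulerian weighting on the non-bipartite union, so this is patchable.)

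The genuine gap is case (b), $|S|\ge 3$, which is exactly where the paper spends all of its effort. The claimed outcome of your enumeration --- that one always finds a forbidden $4$-cycle, or else shared edges that ``reduce to (a)'' --- is false, and the reduction is in any case circular, since (a) yields the $10$-cycle contradiction only when $|S|=2$ and otherwise defers back to (b). Concretely, take $C_1=1\,2\,3\,4\,5\,6$ and $C_2=1\,2\,4\,7\,8\,9$, so that $S=\{1,2,4\}$ and the cycles share the edge $\{1,2\}$. The arc-length patterns of $S$ on the two cycles are $(1,2,3)$ and $(1,1,4)$, they disagree, yet no pair of vertices of $S$ has two distinct common neighbors; the cycles of $C_1\cup C_2$ have lengths $3,5,6,6,7,7$, so hypotheses (ii) and (iii) are satisfied, and the two length-$5$ paths from $1$ to $2$ meet internally at $4$, so no $10$-cycle arises. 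Analogous surviving unions exist for $|S|=4,5$; the paper's proof consists precisely of listing all of them (eleven graphs) and verifying $\deg(\Pc_H)\ge 3$ for each directly, before applying Corollary \ref{cor:deg}. So for $|S|\ge 3$ you cannot argue by contradiction: you must identify the surviving graphs $H=C_1\cup C_2$ and establish the degree bound for each, for instance by finding suitable non-uniform weightings (these $H$ are typically neither Eulerian nor bipartite, so the uniform weight $1/2$ no longer produces a lattice point and the slack in $\dim\Pc_H+1-r$ shrinks). As written, the hardest part of the lemma is the part your argument declares vacuous.
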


\begin{proof}

Take different two $6$-cycles $C_1$ and $C_2$  with the vertex sets $V(C_1)=\{1,\ldots,6\} \subset [N]$ and $V(C_2)=\{v_1,\ldots,v_6\} \subset [N]$ and the edge sets $E(C_1)=\{\{1,2\},\ldots,\{5,6\},\{1,6\}\}$ and   $E(C_2)=\{\{v_1,v_2\},\ldots,\{v_5,v_6\},\{v_1,v_6\}\}.$
Let $H$ be the finite simple graph such that
$V(H)=V(C_1) \cup V(C_2)$ and $E(H)=E(C_1) \cup E(C_2)$.
Then we can assume that $H$ is a subgraph of $G$.

Suppose that $V(C_1)\cap V(C_2) = \emptyset$.
We introduce the lattice polytope $\Qc \subset \RR^N$  which is the convex hull of $\{\rho(e) \in \RR^N : e \in E(C_1) \cup E(C_2) \}$.
Then one has $\deg(\Qc) = 4$.
Hence by using Lemma \ref{lem:sta}, we obtain $\deg(\Pc_G) \geq 3$.

Now, we show that if $V(C_1)\cap V(C_2) \neq \emptyset$,
then $\deg(\Pc_H) \geq 3$, hence one has $\deg(\Pc_G) \geq 3$ from Corollary \ref{cor:deg}.

First, suppose that $|V(C_1) \cap V(C_2)|=1$. 
Then we can assume that $v_1=1$.
Moreover, one has  $\deg(\Pc_H) =4$.

Second, suppose that $|V(C_1) \cap V(C_2)|=2$.
Then we should consider the cases where
$(v_1,v_2)=(1,2)$,
$(v_1,v_2)=(1,3)$,
$(v_1,v_2)=(1,4)$,
$(v_1,v_3)=(1,3)$,
$(v_1,v_3)=(1,4)$
or
$(v_1,v_4)=(1,4)$.
In these cases, it follows that  $H$ has $4$-cycles, $H$ has even cycles of length $\geq 8$ or  $\deg(\Pc_H) \geq 3$.

Finally, suppose that $|V(C_1) \cap V(C_2)|=k$ with some positive integer $3 \leq k \leq 6$ and $v_1=1$.
We consider $H$ for any $\{i_2,\ldots,i_{k}\} \subset \{2,\ldots,6\}$ with $i_2<\cdots<i_{k}$ and $\{j_2,\ldots,j_{k}\} \subset \{2,\ldots,6\}$ such that $(v_{i_2},\ldots,v_{i_{k}})=(j_2,\ldots,j_{k})$.
In many cases, $H$ has $4$-cycles or $H$ has even cycles of length $\geq 8$.
In fact, $H$ is one of the following graphs up to graph equivalence:
\begin{center}
	\begin{picture}(400,100)
	\put(10,70){\circle*{5}}
	\put(10,40){\circle*{5}}
	\put(40,90){\circle*{5}}
	\put(40,20){\circle*{5}}
	\put(70,70){\circle*{5}}
	\put(70,40){\circle*{5}}
	
		\put(0,70){$2$}
	\put(0,40){$3$}
	\put(30,90){$1$}
	\put(30,10){$4$}
	\put(75,70){$6$}
	\put(75,40){$5$}

	\put(10,70){\line(0,-1){30}}
	\put(70,70){\line(0,-1){30}}
	\put(10,70){\line(3,2){30}}
	\put(70,70){\line(-3,2){30}}
	\put(40,20){\line(3,2){30}}
	\put(40,20){\line(-3,2){30}}

		\put(50,55){\circle*{5}}
	\put(50,40){\circle*{5}}
	\put(50,70){\circle*{5}}
	\put(10,70){\line(3,-5){30}}
	\put(40,20){\line(1,2){10}}
	\put(50,40){\line(0,1){30}}
	\put(40,90){\line(1,-2){10}}

	\put(0,0){$(v_2,v_3)=(2,4)$}
	
		\put(100,70){$2$}
	\put(100,40){$3$}
	\put(130,90){$1$}
	\put(130,10){$4$}
	\put(175,70){$6$}
	\put(175,40){$5$}
	
		\put(110,70){\circle*{5}}
	\put(110,40){\circle*{5}}
	\put(140,90){\circle*{5}}
	\put(140,20){\circle*{5}}
	\put(170,70){\circle*{5}}
	\put(170,40){\circle*{5}}
	\put(150,55){\circle*{5}}
	\put(150,40){\circle*{5}}
	\put(150,70){\circle*{5}}
	\put(110,70){\line(0,-1){30}}
	\put(170,70){\line(0,-1){30}}
	\put(110,70){\line(3,2){30}}
	\put(170,70){\line(-3,2){30}}
	\put(140,20){\line(3,2){30}}
	\put(140,20){\line(-3,2){30}}
	\put(140,90){\line(-3,-5){30}}
	\put(150,40){\line(0,1){30}}
	\put(140,90){\line(1,-2){10}}
	\put(110,70){\line(4,-3){40}}
	\put(100,0){$(v_2,v_3)=(3,2)$}
	
		\put(210,70){\circle*{5}}
	\put(210,40){\circle*{5}}
	\put(240,90){\circle*{5}}
	\put(240,20){\circle*{5}}
	\put(270,70){\circle*{5}}
	\put(270,40){\circle*{5}}
	\put(230,40){\circle*{5}}
	\put(250,55){\circle*{5}}
	\put(250,70){\circle*{5}}
	
		\put(200,70){$2$}
	\put(200,40){$3$}
	\put(230,90){$1$}
	\put(230,10){$4$}
	\put(275,70){$6$}
	\put(275,40){$5$}
	
	\put(210,70){\line(0,-1){30}}
	\put(270,70){\line(0,-1){30}}
	\put(210,70){\line(3,2){30}}
	\put(270,70){\line(-3,2){30}}
	\put(240,20){\line(3,2){30}}
	\put(240,20){\line(-3,2){30}}
	
	\put(240,90){\line(1,-2){10}}
	\put(230,40){\line(1,0){40}}
	\put(250,55){\line(0,1){15}}
	\put(210,70){\line(2,-3){20}}
		\put(270,40){\line(-5,4){20}}
	\put(200,0){$(v_2,v_4)=(2,5)$}
	
		\put(310,70){\circle*{5}}
	\put(310,40){\circle*{5}}
	\put(340,90){\circle*{5}}
	\put(340,20){\circle*{5}}
	\put(370,70){\circle*{5}}
	\put(370,40){\circle*{5}}
	\put(330,40){\circle*{5}}
	\put(350,40){\circle*{5}}
	\put(350,70){\circle*{5}}
		\put(300,70){$2$}
	\put(300,40){$3$}
	\put(330,90){$1$}
	\put(330,10){$4$}
	\put(375,70){$6$}
	\put(375,40){$5$}
	
	\put(310,70){\line(0,-1){30}}
	\put(370,70){\line(0,-1){30}}
	\put(310,70){\line(3,2){30}}
	\put(370,70){\line(-3,2){30}}
	\put(340,20){\line(3,2){30}}
	\put(340,20){\line(-3,2){30}}
	\put(340,90){\line(-3,-5){30}}
	\put(310,40){\line(1,0){20}}
	\put(340,20){\line(1,2){10}}
	\put(330,40){\line(1,-2){10}}
	\put(350,40){\line(0,1){30}}
	\put(340,90){\line(1,-2){10}}
	\put(300,0){$(v_2,v_4)=(3,4)$}
	
	\end{picture}
\end{center}

\begin{center}
	\begin{picture}(400,100)
	\put(60,70){\circle*{5}}
	\put(60,40){\circle*{5}}
	\put(90,90){\circle*{5}}
	\put(90,20){\circle*{5}}
	\put(120,70){\circle*{5}}
	\put(120,40){\circle*{5}}
	\put(100,40){\circle*{5}}
	\put(100,70){\circle*{5}}
	\put(60,70){\line(0,-1){30}}
	\put(120,70){\line(0,-1){30}}
	\put(60,70){\line(3,2){30}}
	\put(120,70){\line(-3,2){30}}
	\put(90,20){\line(3,2){30}}
	\put(90,20){\line(-3,2){30}}
	\put(100,40){\line(0,1){30}}
	\put(90,90){\line(1,-2){10}}
	\put(90,20){\line(1,2){10}}
	\put(35,0){$(v_2,v_3,v_4)=(2,3,4)$}
	
		\put(50,70){$2$}
	\put(50,40){$3$}
	\put(80,90){$1$}
	\put(80,10){$4$}
	\put(125,70){$6$}
	\put(125,40){$5$}
	
		\put(170,70){$2$}
	\put(170,40){$3$}
	\put(200,90){$1$}
	\put(200,10){$4$}
	\put(245,70){$6$}
	\put(245,40){$5$}
	
	\put(180,70){\circle*{5}}
	\put(180,40){\circle*{5}}
	\put(210,90){\circle*{5}}
	\put(210,20){\circle*{5}}
	\put(240,70){\circle*{5}}
	\put(240,40){\circle*{5}}
	\put(220,55){\circle*{5}}
	\put(220,70){\circle*{5}}
	\put(180,70){\line(0,-1){30}}
	\put(240,70){\line(0,-1){30}}
	\put(180,70){\line(3,2){30}}
	\put(240,70){\line(-3,2){30}}
	\put(210,20){\line(3,2){30}}
	\put(210,20){\line(-3,2){30}}
	
	\put(210,90){\line(1,-2){10}}
	\put(180,40){\line(1,0){60}}
	\put(220,55){\line(0,1){15}}
	\put(240,40){\line(-5,4){20}}
		\put(155,0){$(v_2,v_3,v_4)=(2,3,5)$}
	
	\put(300,70){\circle*{5}}
	\put(300,40){\circle*{5}}
	\put(330,90){\circle*{5}}
	\put(330,20){\circle*{5}}
	\put(360,70){\circle*{5}}
	\put(360,40){\circle*{5}}
	
		\put(290,70){$2$}
	\put(290,40){$3$}
	\put(320,90){$1$}
	\put(320,10){$4$}
	\put(365,70){$6$}
	\put(365,40){$5$}
	
	\put(340,55){\circle*{5}}
	\put(340,70){\circle*{5}}
	\put(300,70){\line(0,-1){30}}
	\put(360,70){\line(0,-1){30}}
	\put(300,70){\line(3,2){30}}
	\put(360,70){\line(-3,2){30}}
	\put(330,20){\line(3,2){30}}
	\put(330,20){\line(-3,2){30}}
	\put(300,70){\line(3,-5){30}}
\put(330,90){\line(1,-2){10}}
\put(340,55){\line(0,1){15}}
\put(360,40){\line(-5,4){20}}
		\put(275,0){$(v_2,v_3,v_4)=(2,4,5)$}
	
	\end{picture}
\end{center}
\begin{center}
	\begin{picture}(400,100)
	\put(40,70){\circle*{5}}
	\put(40,40){\circle*{5}}
	\put(70,90){\circle*{5}}
	\put(70,20){\circle*{5}}
	\put(100,70){\circle*{5}}
	\put(100,40){\circle*{5}}
	\put(80,70){\circle*{5}}
	\put(40,70){\line(0,-1){30}}
	\put(100,70){\line(0,-1){30}}
	\put(40,70){\line(3,2){30}}
	\put(100,70){\line(-3,2){30}}
	\put(70,20){\line(3,2){30}}
	\put(70,20){\line(-3,2){30}}
	\put(70,90){\line(1,-2){10}}
	\put(80,70){\line(1,0){20}}
		\put(40,70){\line(3,-5){30}}
		
			\put(30,70){$2$}
		\put(30,40){$3$}
		\put(60,90){$1$}
		\put(60,10){$4$}
		\put(105,70){$6$}
		\put(105,40){$5$}
	
	\put(0,0){$(v_2,v_3,v_4,v_5)=(2,4,5,6)$}
	
	\put(180,70){\circle*{5}}
	\put(180,40){\circle*{5}}
	\put(210,90){\circle*{5}}
	\put(210,20){\circle*{5}}
	\put(240,70){\circle*{5}}
	\put(240,40){\circle*{5}}
	\put(220,70){\circle*{5}}
	\put(180,70){\line(0,-1){30}}
	\put(240,70){\line(0,-1){30}}
	\put(180,70){\line(3,2){30}}
	\put(240,70){\line(-3,2){30}}
	\put(210,20){\line(3,2){30}}
	\put(210,20){\line(-3,2){30}}
	
	\put(210,90){\line(-3,-5){30}}
		\put(210,90){\line(1,-2){10}}
		\put(220,70){\line(1,0){20}}
		
				\put(170,70){$2$}
		\put(170,40){$3$}
		\put(200,90){$1$}
		\put(200,10){$4$}
		\put(245,70){$6$}
		\put(245,40){$5$}
	
	\put(140,0){$(v_2,v_3,v_4,v_5)=(3,4,5,6)$}
	
	\put(300,70){\circle*{5}}
	\put(300,40){\circle*{5}}
	\put(330,90){\circle*{5}}
	\put(330,20){\circle*{5}}
	\put(360,70){\circle*{5}}
	\put(360,40){\circle*{5}}
	
	\put(320,40){\circle*{5}}
	\put(300,70){\line(0,-1){30}}
	\put(360,70){\line(0,-1){30}}
	\put(300,70){\line(3,2){30}}
	\put(360,70){\line(-3,2){30}}
	\put(330,20){\line(3,2){30}}
	\put(330,20){\line(-3,2){30}}
	\put(300,70){\line(2,-3){20}}
		\put(320,40){\line(1,0){40}}
		
				\put(290,70){$2$}
		\put(290,40){$3$}
		\put(320,90){$1$}
		\put(320,10){$4$}
		\put(365,70){$6$}
		\put(365,40){$5$}

	\put(330,90){\line(-3,-5){30}}
	\put(280,0){$(v_2,v_3,v_5,v_6)=(3,2,5,6)$}
	\end{picture}
\end{center}
In the above cases, one has $\deg(\Pc_H) \geq 3$.

Therefore, we complete the proof.
\end{proof}


Finally, we are in the position to give a proof of Theorem \ref{thm:lin}. 

\begin{proof}[Proof of Theorem \ref{thm:lin}]
Suppose that $K[G]$ has a $3$-linear resolution.  Then the above discussion guarantees that $K[G]$ is Cohen--Macaulay.  
Lemma \ref{lem:EG} then says that the number of generators of $I_G$ is ${c + 2 \choose 3} = c(c+1)(c+2)/6$, where $c = |E(G)| - \dim K[G]$.   
 On the other hand, from Lemma \ref{lem:gen} (b) the number of generators of $I_G$ is at most $2$.
   If $K[G]$ is not a hypersurface, then the number of generators of $I_G$ is equal to $2$.  However, no positive integer $c$ satisfies $c(c+1)(c+2)/6 = 2$.  As a result, $K[G]$ must be a hypersurface.
\end{proof}

\begin{Example}
{\em
Let $G_{2, 3}$ be the complete bipartite graph on $\{1,2\}\cup\{3,4,5\}$.  Its edge ring 
\[
K[G_{2,3}] \cong K[x_{1}, x_{2}, x_{3}, x_{4}, x_{5}, x_{6}] / (x_{1}x_{5} - x_{2}x_{4}, x_{1}x_{6} - x_{3}x_{4}, x_{2}x_{6} - x_{3}x_{5})
\]
has a 2-linear resolution (\cite[Theorem 4.6]{2linear}).  However, $K[G_{2,3}]$ is not a hypersurface.
} 
\end{Example}

\end{document}